\newcommand{\Xcomment}[1]{}
\newtheorem{theorem}{Theorem}[section]
\newtheorem{lemma}[theorem]{Lemma}
\newcommand{\SEC}[1]{\ref{sec:#1}}  
\newcommand{\SSEC}[1]{\ref{ssec:#1}}  
\makeatletter \@addtoreset{equation}{section} \makeatother
\newenvironment{proof}{\noindent{\bf Proof}~}%
{\hfill$\qed$\medskip}
\def\qed{ \ \vrule width.1cm height.3cm depth0cm}
\newenvironment{numitem1}{\refstepcounter{equation}\begin{enumerate}%
\item[(\thesection.\arabic{equation})]}{\end{enumerate}}
\newcommand{\refeq}[1]{(\ref{eq:#1})}  
\renewcommand{\section}{\@startsection{section}{1}{0pt}%
{-3.5ex plus -1ex minus -.2ex}{2.3ex plus .2ex}%
{\normalfont\Large}}
\renewcommand{\subsection}{\@startsection{subsection}{2}{0pt}%
{-3.0ex plus -1ex minus -.2ex}{-1.5ex plus .2ex}%
{\normalfont\large\bf}}
\renewcommand{\subsection}{\@startsection{subsection}{2}{0pt}%
{-3.0ex plus -1ex minus -.2ex}{1.5ex plus .2ex}%
{\normalfont\normalsize\bf}}
\def\Rset{{\mathbb R}}
\def\Zset{{\mathbb Z}}
\def\Kset{{\mathbb K}}
\def\Ascr{{\cal A}}
\def\Cscr{{\cal C}}
\def\Dscr{{\cal D}}
\def\Escr{{\cal E}}
\def\Fscr{{\cal F}}
\def\Iscr{{\cal I}}
\def\Jscr{{\cal J}}
\def\Mscr{{\cal M}}
\def\Oscr{{\cal O}}
\def\Pscr{{\cal P}}
\def\Rscr{{\cal R}}
\def\Sscr{{\cal S}}
\def\Tscr{{\cal T}}
\def\Vscr{{\cal V}}
\def\Wscr{{\cal W}}
\def\frakU{\mathfrak{U}}
\def\frakL{\mathfrak{L}}
\def\tilde{\widetilde}
\def\hat{\widehat}
\def\bar{\overline}
\def\eps{\epsilon}
\def\vareps{\varepsilon}
\def\Zfr{Z^{\,\rm fr}}
\def\Zrear{Z^{\,\rm rear}}
\def\Zpfr{Z'^{\,\rm fr}}
\def\Zprear{Z'^{\,\rm rear}}
\def\Cfr{C^{\,\rm fr}}
\def\Cpfr{C'^{\,\rm fr}}
\def\Crear{C^{\,\rm rear}}
\def\Cprear{C'^{\,\rm rear}}
\def\Omegafr{\Omega^{\,\rm fr}}
\def\Omegarear{\Omega^{\,\rm rear}}
\def\Ufr{U^{\,\rm fr}}
\def\Urear{U^{\,\rm rear}}
\def\mmm{{[-m..m]}}
\def\mmmm{{[-m..m]^-}}
\def\mmmp{{[-m..m]^+}}
\def\Ndiam{N^{\lozenge}}
\def\Zup{Z^{\,\rm up}}
\def\Zlow{Z^{\,\rm low}}
\def\Kup{K^{\,\rm up}}
\def\Klow{K^{\,\rm low}}
\def\Qfrag{Q^\equiv}
\def\Cfrag{C^\equiv}
\def\Qpfrag{Q'^{\,\equiv}}
\begin{document}

 \title{The purity phenomenon for symmetric separated set-systems}

 \author{Vladimir I.~Danilov\thanks{Central Institute of Economics and
Mathematics of the RAS, 47, Nakhimovskii Prospect, 117418 Moscow, Russia;
email: danilov@cemi.rssi.ru.}
 \and
Alexander V.~Karzanov
\thanks{Central Institute of Economics and Mathematics of
the RAS, 47, Nakhimovskii Prospect, 117418 Moscow, Russia; email:
akarzanov7@gmail.com. Corresponding author.
}
  \and
Gleb A.~Koshevoy
\thanks{The Institute for Information Transmission Problems of
the RAS, 19, Bol'shoi Karetnyi per., 127051 Moscow, Russia; email:
koshevoyga@gmail.com. Supported in part by grant RSF 21-11-00283. }
  }

\date{}

 \maketitle

\begin{abstract}
Let $n$ be a positive integer. A collection $\Sscr$ of subsets of
$[n]=\{1,\ldots,n\}$ is called \emph{symmetric} if $X\in \Sscr$ implies
$X^\ast\in\Sscr$, where $X^\ast:=\{i\in [n]\colon n-i+1\notin X\}$. As the main results of this paper, one shows
that in each of the three types of separation relations: \emph{strong},
\emph{weak} and \emph{chord} ones, the following ``purity phenomenon'' takes
place: all inclusion-wise maximal symmetric separated collections in $2^{[n]}$
have the same cardinality. These give ``symmetric versions'' of well-known
results on the purity of usual strongly, weakly and chord separated collections
of subsets of $[n]$, and in the case of weak separation, this extends a
result due to Karpman on the purity of symmetric weakly separated collections
in $\binom{[n]}{n/2}$ for $n$ even.
 \end{abstract}

{\em Keywords}\,: strong separation, weak separation, chord separation, rhombus
tiling, combined tiling, cubillage
\smallskip

{\em MSC Subject Classification}\, 05E10, 05B45

\baselineskip=15pt
\parskip=1pt

\section{Introduction}  \label{sec:intr}

Let $n$ be a positive integer and let $[n]$ denote the set $\{1,2,\ldots,n\}$.
In this paper, we deal with three known types of relations on subsets of $[n]$,
called strong, weak and chord separation ones. In the definitions below, for
subsets $A,B\subseteq [n]$, we write $A<B$ if the maximal element $\max(A)$ of
$A$ is smaller than the minimal element $\min(B)$ of $B$ (letting
$\max(\emptyset):=-\infty$ and $\min(\emptyset):=\infty$). When $B-A\ne\emptyset$, we say that $A$
\emph{surrounds} $B$ if $\min(A-B)<\min(B-A)$ and $\max(A-B)>\max(B-A)$ (where
$A-B$ denotes the set difference $\{i\colon A\ni i\notin B\}$).
\medskip

\noindent \textbf{Definitions.} Sets $A,B\subseteq[n]$ are called
\emph{strongly separated} (from each other) if there are no three elements
$i<j<k$ of $[n]$ such that one of $A-B$ and $B-A$ contains $i,k$, and the other
contains $j$ (equivalently, either $A-B<B-A$ or $B-A<A-B$ or $A=B$). Sets
$A,B\subseteq[n]$ are called \emph{chord separated} if there are no four
elements $i<j<k<\ell$ of $[n]$ such that one of $A-B$ and $B-A$ contains $i,k$,
and the other contains $j,\ell$. Chord separated sets $A,B\subseteq[n]$ are
called \emph{weakly separated} if the following additional condition holds: if
$A$ surrounds $B$ then $|A|\le|B|$, and if $B$ surrounds $A$ then $|B|\le|A|$
(where $|A'|$ is the number of elements in $A'$). Accordingly, a collection
$\Ascr\subseteq 2^{[n]}$ of subsets of $[n]$ is called strongly (weakly, chord)
separated if any two members of $\Ascr$ are strongly (resp. weakly, chord)
separated.
 \medskip

The notions of strong and weak separations were introduced by Leclerc and
Zelevinsky~\cite{LZ}, and the notion of chord separation by
Galashin~\cite{gal}. The first two notions appeared in~\cite{LZ} in
connection with the problem of characterizing quasi-commuting flag minors of a
quantum matrix. (In particular, one shows there that in the quantized
coordinate ring $\Oscr_q(\Mscr_{m,n}(\Kset))$ of $m\times n$ matrices over a field $\Kset$, where $q\in\Kset^\ast$, two flag minors $[I]$ and $[J]$ quasi-commute, i.e., satisfy the equality $[J][I]=q^c[I][J]$ for some integer $c$,  if and only if their column sets $I,J\subseteq[n]$ are weakly separated.)  For a discussion on this and wider relations between the weak/strong separation and quantum minors, see also~\cite[Sect.~8]{DK}).

For brevity we will refer to strongly, weakly, and chord
separated collections as \emph{s-}, \emph{w-}, and \emph{c-collections},
respectively. The sets of such collections in $2^{[n]}$ are denoted by ${\bf
S}_n$, ${\bf W}_n$, and ${\bf C}_n$, respectively. As is shown in~\cite{LZ},
  \begin{numitem1} \label{eq:sn}
the maximal possible sizes of strongly and weakly separated collections in
$2^{[n]}$ are the same and equal to
$s_n:=\binom{n}{2}+\binom{n}{1}+\binom{n}{0}$ ($=\frac12 n(n+1)+1$).
  \end{numitem1}

When dealing with one or another set (class) $\bf C$ of collections in
$2^{[n]}$, one says that  $\bf C$ is \emph{pure} if any maximal by inclusion
collection in it is maximal by size (viz. number of members). Leclerc and
Zelevinsky showed in~\cite{LZ} that the set ${\bf S}_n$ is pure and conjectured
that ${\bf W}_n$ is pure as well. This was affirmatively answered
in~\cite{DKK1}, by proving that
\begin{numitem1} \label{eq:LZ}
any w-collection in $2^{[n]}$ can be extended to a w-collection of size $s_n$.
  \end{numitem1}
(One application of the purity of ${\bf W}_n$ mentioned in~\cite{LZ} concerns the dual canonical basis in $\Oscr_q(\Mscr_{m,n}(\Kset))$ containing all quasi-commuting monomials.) For other interesting classes of w-collections with the purity behavior, see~\cite{OPS,OS,DKK2}.) The corresponding purity result for chord separation was obtained by Galashin~\cite{gal}:

\begin{numitem1} \label{eq:gal}
any c-collection in $2^{[n]}$ can be extended to a c-collection of size
$c_n:=\binom{n}{3}+\binom{n}{2}+\binom{n}{1}+\binom{n}{0}$.
  \end{numitem1}

Recently Karpman~\cite{karp} revealed the purity for a special class of
symmetric w-collections, and we now outline this result.

It will be convenient to us to interpret the elements of the ground set $[n]$
as \emph{colors}. Consider the following relations on $[n]$ and $2^{[n]}$:
  \begin{numitem1} \label{eq:implicat}
  \begin{itemize}
\item[(i)]
for $i\in [n]$, define $i^\circ:=n+1-i$ (the ``complementary color'' to $i$);
\item[(ii)] for $A\subseteq [n]$, define $\bar A:=[n]-A$ (the ``complementary set'' to $A$);
\item[(iii)] for $A\subseteq[n]$, define $A^\ast:=\{i^\circ\colon i\in \bar A\}$.
  \end{itemize}
  \end{numitem1}

Clearly (i) and (ii) are involutions: $(i^\circ)^\circ=i$ and $\bar{\bar A}=A$.
And (iii) is viewed as the composition of these two involutions (which commute); so it is an
involution as well: $(A^\ast)^\ast=A$. Involution~(iii), which is of most
interest to us in this paper, was introduced by Karpman~\cite{karp} in the
special case when $n$ is even and $|A|=n/2$. (In that case, the author treats the so-called symmetric plabic graphs, which are closely related to the corresponding Lagrangian Grassmannian, the space of maximal isotropic subspaces with respect to a symplectic form.)

For convenience, we will refer to
involution~(iii) as the \emph{K-involution} and use the following definition
for corresponding set-systems.
 \medskip
 
\noindent\textbf{Definition.} A collection $\Sscr\subseteq 2^{[n]}$ is called
\emph{symmetric} if it is closed under the K-involution, i.e., $A\in\Sscr$
implies $A^\ast\in\Sscr$.
  \medskip

Using a technique of plabic tilings and relying on the purity of the set of
w-collections in a ``discrete Grassmannian'' $\binom{[n]}{m}=\{A\subseteq[n] \colon |A|=m\}$ with $m\in[n]$ (cf.~\cite{OPS}), Karpman showed the following

\begin{theorem}[\cite{karp}] \label{tm:karp}
For $n$ even, all inclusion-wise maximal symmetric w-collections in
$\binom{[n]}{n/2}$ have the same cardinality, which is equal to $n^2/4+1$.
  \end{theorem}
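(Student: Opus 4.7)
The plan is to translate the statement into combinatorics of plabic tilings and then establish purity on that side. By the result of Oh--Postnikov--Speyer~\cite{OPS}, maximal weakly separated collections in $\binom{[n]}{n/2}$ are in bijection with reduced plabic tilings of a convex $n$-gon $P_n$ with cyclically labeled boundary $1,\ldots,n$; the members of the collection label the faces of the tiling, and every such tiling has $(n/2)(n-n/2)+1 = n^2/4+1$ faces. It therefore suffices to identify the K-involution with a geometric involution on plabic tilings and to establish purity for the invariant ones.

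Geometrically, the map $i\mapsto n+1-i$ acts on $\partial P_n$ as the reflection $\sigma$ through the axis passing through the midpoints of the edges $(n,1)$ and $(n/2,n/2+1)$ (which is a genuine axis of the polygon because $n$ is even), and the complementation $A\mapsto\bar A$ acts on the associated bipartite plabic graph by swapping its two colors. Hence $A\mapsto A^*$ is realized as ``reflection in $\sigma$ combined with color swap''; a w-collection $\Sscr\subseteq\binom{[n]}{n/2}$ is K-symmetric exactly when its plabic tiling is invariant under this involution, and because the correspondence respects inclusion, it restricts to a bijection between maximal symmetric w-collections and maximal symmetric plabic tilings.

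The proof is then completed by showing that every symmetric w-collection $\Sscr$ extends to a maximal symmetric w-collection, and that every such maximal one has $n^2/4+1$ members. The strategy is an equivariant version of the OPS completion algorithm: start from the partial symmetric tiling associated with $\Sscr$, pick an untiled region $R$, and, if $R\ne\sigma(R)$, extend over $R$ using \cite{OPS} while simultaneously extending over $\sigma(R)$ by the reflected-and-color-swapped copy of that completion; if $R=\sigma(R)$, extend $R$ by a \emph{symmetric} maximal completion. The resulting tiling is a complete symmetric plabic tiling, so it has $n^2/4+1$ faces by the OPS count, and its face label set is a symmetric w-collection containing $\Sscr$; maximality of $\Sscr$ forces equality. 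I expect the main obstacle to be the self-symmetric case $R=\sigma(R)$, where one must establish the existence of a symmetric maximal completion of a central region that straddles the axis $\sigma$. This is exactly where Karpman's analysis enters, through an equivariant square-move theorem for symmetric reduced plabic graphs together with an explicit symmetric seed tiling, which together guarantee both existence and size-invariance under symmetric local moves.
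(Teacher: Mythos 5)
Your proposal takes a genuinely different route from the paper. The paper does not prove Theorem~\ref{tm:karp} directly: it cites Karpman and then (Remark~1) deduces it from the more general Theorem~\ref{tm:symm-ws}, which is proved via combined tilings on the zonogon $Z(n,2)$. That deduction is a short padding argument: given a symmetric w-collection $\Sscr\subseteq\binom{[n]}{n/2}$, adjoin all intervals of size $\ge n/2$ and all co-intervals of size $\le n/2$; one checks the enlarged collection is symmetric and weakly separated, and that $\Sscr$ is maximal in $\binom{[n]}{n/2}$ iff the enlarged collection is maximal in $2^{[n]}$. Your plan instead stays inside $\binom{[n]}{n/2}$, using the Oh--Postnikov--Speyer plabic-tiling model and the realization of the K-involution as reflection composed with color swap; this is a faithful reconstruction of Karpman's original approach in~\cite{karp}. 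The paper's route buys generality (all of $2^{[n]}$, plus the strong- and chord-separation analogues) at the cost of introducing the combi machinery; yours stays within the more familiar Grassmannian/plabic setting.

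That said, your sketch has a real gap, which you flag yourself: in the self-symmetric case $R=\sigma(R)$ you assert that a symmetric maximal completion of the central region exists, deferring to an ``equivariant square-move theorem'' and an ``explicit symmetric seed tiling'' without producing either. This is exactly the technical heart of Karpman's argument and cannot be waved off -- an arbitrary OPS completion of such a region has no reason to be $\sigma$-invariant, and it is not automatic that a symmetric one can always be chosen. (The paper's combi proof faces an analogous obstruction and handles it concretely: Case~2 of Section~\SEC{symm-w-even} shows that the middle line $M$ must be covered by edges of the ftq-combi, using the weak-separation inequalities on $B,B^\ast,C$; this is the step that replaces your missing lemma.) A smaller point: you should also verify that the OPS bijection genuinely intertwines the K-involution on face-label sets with reflection-plus-color-swap on tilings; this is true but is itself a lemma in~\cite{karp} rather than an immediate consequence of the definitions. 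As written, your proposal is a correct high-level plan matching the cited reference, but not a self-contained proof.
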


\noindent(This coincides with the maximum cardinality when the symmetry
condition is discarded.) We give the following generalization of that result.
  \begin{theorem} \label{tm:symm-ws}
For any $n\in\Zset_{>0}$, all inclusion-wise maximal symmetric w-collections in
$2^{[n]}$ have the same cardinality. When $n$ is even, it is equal to $s_n$.
When $n$ is odd, it is equal to $s_n-(n-1)/2$.
  \end{theorem}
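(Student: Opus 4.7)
The plan is to interpret the problem geometrically via combined tilings of the zonogon $Z_n=\sum_{i=1}^{n}[0,\xi_i]$, following the strategy of~\cite{DKK1} used to prove the unconstrained purity~\refeq{LZ}. I would choose the generators $\xi_i$ to lie in the upper half-plane, in angular order and symmetric about a vertical axis through the centre of $Z_n$, so that the transposition $i\leftrightarrow n+1-i$ is realised as a vertical reflection $V$ of $Z_n$; since $A\mapsto\bar A$ corresponds to the central symmetry $C$ of $Z_n$, the K-involution $A\mapsto A^\ast$ becomes the horizontal reflection $R:=C\circ V$. Under the bijection between maximal w-collections and combined tilings of $Z_n$ (each having $s_n$ vertices labelled by subsets of $[n]$), a w-collection is symmetric precisely when its tiling is $R$-invariant. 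Thus Theorem~\ref{tm:symm-ws} reduces to showing that all maximal $R$-invariant combined tilings have the same number of vertices and computing this number.

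I would then split the argument into (a) every symmetric w-collection extends to a maximal one and (b) all maximal symmetric w-collections have the same size. For (a), starting from a symmetric $\Wscr$, I would first extend it via~\refeq{LZ} to a (generally non-symmetric) maximal w-collection $\Wscr^+$; both $\Wscr^+$ and $R(\Wscr^+)$ contain $\Wscr$. The goal is to transform $\Wscr^+$ into an $R$-invariant maximal collection by mutations paired with their $R$-mirrors, preserving the already-symmetrised portion. The crucial technical lemma is that whenever a mutation is needed on one side of $R$, its $R$-mirror on the other side is simultaneously available and the two do not interfere. Claim (b) would then follow because each pair of mirrored mutations (or a single self-mirrored mutation meeting the axis) preserves the total vertex count.

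The main obstacle, and the source of the parity-dependent formula, is counting the vertices of an $R$-invariant maximal combined tiling. When $n$ is even the horizontal axis sits at integer height $n/2$, where genuine vertex labels can occur; one can build a symmetric tiling by tiling the upper half of $Z_n$ freely and reflecting, recovering all $s_n$ vertices and in particular reproving Theorem~\ref{tm:karp} at the middle layer. When $n$ is odd the middle generator $\xi_{(n+1)/2}$ is fixed by $V$, the axis crosses the interiors of certain tiles, and for each of the $(n-1)/2$ reflective generator pairs $\{\xi_i,\xi_{n+1-i}\}$ with $i<(n+1)/2$, the $R$-symmetric configuration is forced to contain an axis-straddling non-rhombus tile, each such tile costing exactly one interior vertex compared with a generic tiling. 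Verifying that exactly $(n-1)/2$ vertices must be surrendered, and that this loss is unavoidable in any $R$-invariant maximal tiling, will be the crux of the argument and yield the claimed cardinality $s_n-(n-1)/2$.
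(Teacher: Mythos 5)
Your geometric setup agrees with the paper's: both realize the K-involution as a horizontal reflection of a symmetric zonogon $Z(n,2)$, and both rely on the bijection between maximal w-collections and (ftq-)combies. But from that point on your route diverges from the paper's, and the divergence contains a genuine gap.

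The paper proves the even case by contradiction: take a maximal-by-inclusion symmetric $\Cscr$ with $|\Cscr|<s_n$, extend it to a maximal $\Wscr$ with an ftq-combi $K$, and inspect the middle line $M$. Either $M$ is covered by edges of $K$ (then reflecting the lower half across $M$ yields a symmetric ftq-combi of size $s_n$ containing $\Cscr$, contradicting maximality), or $M$ crosses the interior of a tile (then a short case analysis produces a vertex $B$ of that tile with $B,B^\ast$ weakly separated from $\Cscr$ and from each other, so $\Cscr\cup\{B,B^\ast\}$ is again a larger symmetric w-collection). There is no flip or mutation anywhere. The odd case is then done by a contraction/expansion reduction to the even case (delete color $0$, solve, lift by inserting a strip of rectangles along $M$ and noting each rectangle contributes one extra non-symmetric vertex — whence the $-(n-1)/2$).

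Your plan instead hinges on a claimed lemma that $\Wscr^+$ can be carried to an $R$-invariant configuration by performing mutations together with their $R$-mirrors, with mirror mutations ``simultaneously available and non-interfering.'' This is asserted, not proved, and it is not obviously true: two mirror mutations can involve tiles that meet near the axis, and there is no guarantee that performing one does not destroy the availability of the other, nor that the process terminates in an $R$-invariant state while preserving the subset $\Wscr$. More seriously, the strategy cannot work as stated for $n$ odd: $\Wscr^+$ has $s_n$ vertices, mutations preserve cardinality, and yet the target maximal symmetric collection has $s_n-(n-1)/2<s_n$ vertices, so no sequence of cardinality-preserving mutations can reach it. Your last paragraph acknowledges the vertex loss but attributes it to ``axis-straddling non-rhombus tiles'' in an $R$-invariant tiling, which is incompatible with the mutation picture: an $R$-invariant maximal combi cannot exist for $n$ odd precisely because combies always have $s_n$ vertices, so your opening reduction (``Theorem~\ref{tm:symm-ws} reduces to showing that all maximal $R$-invariant combined tilings have the same number of vertices'') is vacuous in the odd case. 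You need a different object there — the paper's expanded ``pseudo-combi'' with middle rectangles, or equivalently the contraction-to-even trick — and the quantitative step (exactly $(n-1)/2$ forced losses) is the part your plan leaves open, not a detail.
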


\noindent\textbf{Remark 1.} Theorem~\ref{tm:symm-ws} implies
Theorem~\ref{tm:karp}. Moreover, one can show a sharper result, as follows. For
an even $n$ and an integer $k$ such that $0\le k<n/2$, let $\Lambda_{n,k}$
denote the union of sets $\binom{[n]}{i}$ over $n/2-k\le i\le n/2+k$. We assert
that: $(\ast)$ all inclusion-wise maximal symmetric w-collections $\Sscr$ in
$\Lambda_{n,k}$ have the same size (which gives Theorem~\ref{tm:karp} when
$k=0$). To see this, given a symmetric w-collection $\Sscr$ in $\Lambda_{n,k}$,
extend it to the collection $\Wscr\subset 2^{[n]}$ by adding the set $\Iscr$ of
all intervals $I\subseteq[n]$ of size $\ge n/2+k$ and the set $\Jscr$ of all
co-intervals $J\subset[n]$ of size $\le n/2-k$ (including the empty co-interval
$\emptyset$). Hereinafter, an \emph{interval} in $[n]$ is meant to be a set of
the form $\{a,a+1,\ldots,b\}\subseteq[n]$, denoted as $[a..b]$ (in particular,
$[n]=[1..n]$), and a \emph{co-interval} is the complement $\bar I=[n]-I$ of an
interval $I$. One can check that: (i) each interval $I$ is weakly separated
from any set $A\subseteq [n]$ with $|A|\le|I|$; symmetrically, each co-interval
$J$ is weakly separated from any $B\subseteq [n]$ with $|B|\ge |J|$; (ii) for
any $I\in \Iscr$, \;$I^\ast$ is a co-interval in $\Jscr$, a vice versa; (iii)
any set $A\in 2^{[n]}-\Iscr$ with $|A|>n/2+k$ is not weakly separated from some
$I\in\Iscr$; symmetrically, any $B\in 2^{[n]}-\Jscr$ with $|B|< n/2-k$ is not
weakly separated from some $J\in\Jscr$. These properties imply that $\Sscr$ as
above is inclusion-wise maximal in $\Lambda_{n,k}$ if and only if
$\Wscr:=\Sscr\cup\Iscr\cup\Jscr$ is inclusion-wise maximal in $2^{[n]}$, whence
assertion $(\ast)$ follows from Theorem~\ref{tm:symm-ws}. Also, given $n,k$,
we can easily express $|\Wscr|$ via $|\Sscr|$, and back.
  \medskip

The proof of Theorem~\ref{tm:symm-ws} relies on~\refeq{LZ} and is based on a
geometric approach: it attracts a machinery of \emph{combined tilings}, or
\emph{combies} for short, which are certain planar polyhedral complexes on
two-dimensional \emph{zonogons} introduced and studied in~\cite{DKK2}. It turns
out that when $n$ is even, there is a natural bijection between the maximal
symmetric w-collections in $2^{[n]}$ and the so-called ``symmetric combies'' on
the zonogon $Z(n,2)$ (this is analogous to the existence of a bijection between
usual maximal w-collections and combies, see~\cite[Theorems~3.4,3.5]{DKK2}).

As a by-product of our method of proof of Theorem~\ref{tm:symm-ws}, we obtain a
similar purity result for the strong separation:

  \begin{numitem1} \label{eq:symm-str}
all inclusion-wise maximal symmetric s-collections in $2^{[n]}$ have the same
cardinality; it is equal to $s_n$ when $n$ is even, and $s_n-(n-1)/2$ when $n$
is odd.
  \end{numitem1}

The next group of  results of this paper concerns symmetric chord
separated collections. Our main theorem in this direction is as follows.

\begin{theorem} \label{tm:symm-chord}
For any $n\in\Zset_{>0}$, all inclusion-wise maximal symmetric c-collections in
$2^{[n]}$ have the same cardinality $c_n$.
  \end{theorem}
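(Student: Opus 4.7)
The plan is to deploy the three-dimensional analogue of the machinery used for Theorem~\ref{tm:symm-ws}, namely Galashin's bijection between maximal c-collections in $2^{[n]}$ and \emph{cubillages} (fine zonotopal tilings) of the cyclic 3-dimensional zonotope $Z(n,3)$: a cubillage $T$ corresponds to its vertex set $\Vert(T)$, which is a maximal c-collection of size $c_n$ by~\refeq{gal}. The first step will be to choose the generators $v_i=(1,t_i,t_i^2)$ for $Z(n,3)$ with $t_{n+1-i}=-t_i$. Then the linear involution $\phi(x,y,z):=(x,-y,z)$ satisfies $\phi(v_i)=v_{n+1-i}$ and fixes $\Sigma:=\sum_i v_i$, so the affine isometry $\sigma(z):=\Sigma-\phi(z)$ of $\Rset^3$ preserves $Z(n,3)$ and sends the vertex labelled by $A\subseteq[n]$ to the vertex labelled by $A^\ast$. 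Thus the K-involution is realized geometrically as $\sigma$, and a cubillage $T$ is $\sigma$-invariant if and only if $\Vert(T)$ is a symmetric c-collection.

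The theorem will then reduce to the central claim that \emph{every symmetric c-collection $\Sscr$ extends to a $\sigma$-invariant cubillage of $Z(n,3)$}. Indeed, if $T\supseteq\Sscr$ is $\sigma$-invariant, then $\Vert(T)$ is a symmetric c-collection of size exactly $c_n$ containing $\Sscr$; hence any inclusion-wise maximal symmetric c-collection must itself equal such a $\Vert(T)$ and have cardinality $c_n$, which proves the theorem.

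To establish the claim I would start from an arbitrary cubillage $T_0\supseteq\Sscr$ supplied by~\refeq{gal}; since $\Sscr$ is symmetric, $\sigma(T_0)$ is another cubillage containing $\Sscr$. The two cubillages are connected by a sequence of local cubic (2-to-3 and 3-to-2) flips, and $\sigma$ acts as an involution on the flip graph of all cubillages containing $\Sscr$. Performing flips in $\sigma$-pairs whenever the two affected cell clusters are disjoint, and at $\sigma$-invariant clusters otherwise, one steers the sequence toward a $\sigma$-invariant cubillage. An alternative is to argue orbit-by-orbit: given symmetric $\Sscr$ with $|\Sscr|<c_n$, locate a $\sigma$-orbit $\{X,X^\ast\}$ (a singleton when $X=X^\ast$) of sets lying close to the fixed locus of $\sigma$ within some cubillage extension of $\Sscr$, and check that the orbit can be adjoined while preserving chord separation.

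The main obstacle will be this equivariant flip/extension step, namely controlling chord separation simultaneously between $X$, $X^\ast$, and every element of $\Sscr$, and verifying that the flip graph of cubillages containing a given symmetric vertex set is $\sigma$-equivariantly connected. In contrast to the weakly separated case, where the interplay of $\sigma$ with the 2-dimensional zonogon $Z(n,2)$ produces the parity defect $(n-1)/2$ for odd $n$, I expect the 3-dimensional $\sigma$-action on $Z(n,3)$ to be ``generic'' enough to admit a symmetric cubillage for every $n$, which is precisely what makes the chord-separated purity come out uniformly as $c_n$.
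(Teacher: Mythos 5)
Your overall framing is the same as the paper's: realize the K-involution as a geometric involution (your $\sigma$, the paper's $\mu$) on $Z(n,3)$, and reduce the theorem to showing that every symmetric c-collection extends to a $\sigma$-invariant cubillage, whose vertex set then has cardinality $c_n$. Up to this reduction your proposal is correct and aligned with the paper.

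The genuine gap is that the central step --- the existence of a $\sigma$-invariant cubillage containing a given symmetric $\Sscr$ --- is not actually proved, and the routes you sketch face real obstacles that you acknowledge but do not resolve. The flip-graph argument requires showing that the subgraph of cubillages containing $\Sscr$ is not merely connected but admits a $\sigma$-equivariant path terminating at a fixed point; this is not established, and ``perform flips in $\sigma$-pairs unless the clusters overlap'' is not a well-defined algorithm (one needs to prove termination and that the fixed-cluster moves always exist and make progress). The orbit-by-orbit variant is likewise unproved: it is not clear why a symmetric pair $\{X,X^\ast\}$ ``close to the fixed locus'' must exist that is chord separated from all of $\Sscr$ \emph{and} from itself. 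Your intuition that the 3-dimensional $\sigma$-action is ``generic enough'' to always yield a symmetric cubillage is exactly the thing that needs proof, and the 2-dimensional analogue is a warning: for odd $n$ there is no symmetric ftq-combi at all (which is why the weak-separation count drops by $(n-1)/2$), so genericity of the symmetry alone cannot be the reason symmetric cubillages exist.

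The paper's actual construction is quite different and more explicit. In the even case it avoids flips entirely: it picks out a specific weak membrane $\Ndiam$ built from the front/rear boundary pieces and the horizontal plate $\Omega$ through the center, projects to a 2D ftq-combi, invokes the already-proved even weak-separation purity to show the vertices on $\Omega$ form a self-symmetric set, and then replaces $\Omega$ by a perturbed surface $\tilde\Omega$ made of ``pits and peaks'' (the boundary of the sub-cubillage above height $m-1$); the symmetry of $\tilde\Omega$ lets one glue the lower half with its $\mu$-mirror image into a symmetric cubillage. The odd case is then reduced to the even one through a careful pipeline: drop color $0$, extend to an even symmetric cubillage on $2m+2$ colors with two auxiliary colors $0',0''$, contract their pies to get a $2m$-color cubillage with two mutually symmetric $0$-membranes $H'\le H''$, and prove (Lemma~\ref{eq:between}) that a symmetric $0$-membrane $H$ can be inserted between them by repeatedly pushing a maximal cube of the gap and its $\mu$-mirror across; this $H$ is then used to expand back to a symmetric cubillage on $Z(2m+1,3)$. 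None of this is captured by a flip-graph heuristic, and in particular the odd case requires genuine new machinery (the auxiliary two-color lift and the membrane-squeezing lemma) that your proposal does not anticipate.
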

(Note that this can be regarded as a generalization of Theorem~\ref{tm:karp} as
well since within any domain of the form $\binom{[n]}{k}$ the notions of
weak and chord separations coincide.)

An important ingredient of the proof of this theorem is the geometric
characterization of maximal chord separated collections in $2^{[n]}$ in terms
of \emph{fine zonotopal tilings}, or \emph{cubillages}, of 3-dimensional cyclic
zonotopes $Z(n,3)$, due to Galashin~\cite{gal} (the term ``cubillage'' that we prefer to use in this paper appeared
in~\cite{KV}). We establish a symmetric analog of that nice property (valid for
both even and odd cases of $n$): any maximal symmetric c-collection in
$2^{[n]}$ can be expressed by the vertex set of a \emph{symmetric cubillage} on
$Z(n,3)$.
\medskip

This paper is organized as follows. Section~\SEC{prelim} contains basic
definitions and reviews some known facts. In particular, it explains the
notions of combined tilings, or \emph{combies}, and fine zonotopal tilings, or
\emph{cubillages} (in the 3-dimensional case), and recalls basic results on them
needed to us. Section~\SEC{symm-w-even} deals with the even color case of
symmetric weakly separated collections and proves Theorem~\ref{tm:symm-ws} for
$n$ even. The odd color case of this theorem is studied in
Section~\SEC{symm-w-odd}. Section~\SEC{symm-c-even} is devoted to the even color case of symmetric chord separated collections, giving the proof of
Theorem~\ref{tm:symm-chord} for $n$ even. The odd case of this theorem is shown
in the concluding Section~\SEC{symm-c-odd}. This section finishes with a slightly
sharper version of Theorem~\ref{tm:symm-chord} (in Remark~6). Also we add two more results in the ends of Sections~\SEC{symm-c-even} and~\SEC{symm-c-odd} (Theorems~\ref{tm:symm-w-c} and~\ref{tm:symm-w-c-odd}) which are devoted to geometric constructions related to embeddings of maximal symmetric w-collections in maximal c-collections.)
  \medskip

It should be noted that the above purity results do not remain true for
``higher'' symmetric separation. Recall that sets $A,B\subseteq[n]$ are called
(strongly) $k$-separated if there are no $k+2$ elements
$i_1<i_2<\cdots<i_{k+2}$ of $[n]$ such that the elements with odd indexes
belong to one, while those with even indexes to the other set among $A-B$ and
$B-A$. In particular, chord separated sets are just 2-separated ones. As is
shown in~\cite{GP}, when $k\ge 3$, a maximal by inclusion $k$-separated
collection in $2^{[n]}$ need not be maximal by size. (In fact, a counterexample
to the purity with $k=3$ given there can be adjusted to the symmetric
3-separation as well.)

Surprisingly, the \emph{maximal by size symmetric} $k$-separated  collections in $2^{[n]}$ possess nice structural and geometric properties; they are systematically studied in~\cite{DKK5} in the context of higher Bruhat orders of types B and C (where some open questions and conjectures are raised as well). One important property among those is that such collections for $n,k$ even can be connected by use of symmetric local mutations (or ``flips'') yielding a poset structure with one minimal and one maximal elements. More about symmetric flips will appear in a forthcoming paper.


\section{Preliminaries}  \label{sec:prelim}

In this section we give additional definitions and notation and review some
facts about combined tilings and cubillages needed for the proofs of
Theorems~\ref{tm:symm-ws} and~\ref{tm:symm-chord}.
 \smallskip

\noindent$\bullet$ ~For an edge $e$ of a directed graph $G$ without parallel
edges, we write $e=(u,v)$ if $e$ connects vertices $u$ and $v$ and is directed
(or ``going'') from $u$ to $v$. A path in $G$ is a sequence
$P=(v_0,e_1,v_1,\ldots,e_k,v_k)$ in which each $e_i$ is an edge connecting
vertices $v_{i-1}$ and $v_i$. It is called a \emph{directed} path if each edge
$e_i$ is directed from $v_{i-1}$ to $v_i$. When it is not confusing, we may
write $P=v_0v_1\ldots v_k$ (using notation via vertices).
\smallskip

\noindent$\bullet$ ~Let $n$ be a positive integer. Define $m:=\lfloor
n/2\rfloor$; then $n=2m$ if $n$ is even, and $n=2m+1$ if $n$ is odd. Instead of
\emph{colors} $1,2,\ldots, n$ (forming $[n]$), it will be often more convenient
to deal with the set of ``symmetric colors'' $-i,i$ for $i=1,\ldots,m$, to
which we also add color 0 when $n$ is odd. This gives the \emph{symmetrized}
color sets
  \begin{numitem1} \label{eq:symm-colors}
$\{-m,\ldots,-1,0,1,\ldots,m\}$ denoted as $[-m..m]$ when $n$ is odd, and
$\{-m,\ldots,-1,1,\ldots,m\}$ denoted as $[-m..m]^-$ when $n$ is even.
  \end{numitem1}

So $i^\circ=-i$ for each color $i$, and the only self-symmetric color is 0
(when $n$ is odd).
\smallskip

\noindent$\bullet$ ~For $A\subseteq[n]$ and $p=0,1,2$, define $\Pi_p(A)$ to be
the set of symmetric color pairs $\{i,i^\circ\}$ in $[n]$ such that
$|A\cap\{i,i^\circ\}|=p$. We say that the pairs in $\Pi_0(A)$, $\Pi_1(A)$, and
$\Pi_2(A)$ are, respectively, \emph{poor}, \emph{ordinary}, and \emph{full} for
$A$. (Note that if $n$ is odd and $i=m+1$, then the ``middle'' pair
$\{i,i^\circ=i\}$ is regarded as either poor or full in $A$.) In these terms,
we observe a useful relationship between symmetric sets $A$ and $A^\ast$:
   \begin{numitem1} \label{eq:pi012}
$\Pi_0(A)=\Pi_2(A^\ast)$, $\Pi_2(A)=\Pi_0(A^\ast)$, and
$\Pi_1(A)=\Pi_1(A^\ast)$; moreover, the ordinary pairs are \emph{stable}, in
the sense that if $i\in A\not\ni i^\circ$ then $i\in A^\ast\not\ni i^\circ$.
  \end{numitem1}
Indeed, $i\in A\not\ni i^\circ$ implies $i\notin \bar A\ni i^\circ$, whence
$i\in A^\ast\not\ni i^\circ$. And $i,i^\circ\notin A$ implies $i,i^\circ\in
\bar A$, whence $i,i^\circ\in A^\ast$.

When dealing with colors in the symmetrized form as above, the sets $\Pi_p(A)$
are defined accordingly.

For a symmetric collection $\Sscr\subseteq 2^{[n]}$ and $h=0,1,\ldots,n$,
define $h$-th \emph{level} of $\Sscr$ as
  $$
  \Sscr_h:=\{A\in\Sscr\colon |A|=h\}.
  $$
Then $\Sscr_h$ consists of the sets $A\in\Sscr$ with
$\Pi_1(A)+\frac12\Pi_2(A)=h$, and~\refeq{pi012} implies
  $$
  \Sscr_h=(\Sscr_{n-h})^\ast,
  $$
where we extend the operator $\ast$ to collections in $2^{[n]}$ in a natural way.
 \smallskip

The next two subsections review the constructions of combined tilings and
cubillages, which are the key objects in our proofs of
Theorems~\ref{tm:symm-ws} and~\ref{tm:symm-chord}, respectively.

\subsection{Zonogon and combies.} \label{ssec:combi}

Let $\Xi$ be a set of $n$ vectors $\xi_i=(x_i,y_i)\in \Rset^2$ such that
  \begin{numitem1} \label{eq:xi-x-y}
  $x_1<\cdots <x_n$ and $y_i=1-\delta_i$, $i=1,\ldots,n$,
    \end{numitem1}
where each $\delta_i$ is a sufficiently small positive real. In addition, we
assume that
  \begin{numitem1} \label{eq:additional}
  \begin{itemize}
\item[(i)] $\Xi$ satisfies the \emph{strict concavity} condition:  for any $i<j<k$,
there exist $\lambda,\lambda'\in \Rset_{>0}$ such that $\lambda+\lambda'>1$ and
$\xi_j=\lambda\xi_i+\lambda'\xi_k$; and
 \item[(ii)]
the vectors in $\Xi$ are $\Zset_2$-independent, i.e., all 0,1-combinations of
these vectors are  different.
 \end{itemize}
  \end{numitem1}

An example is illustrated in the picture; here $n=5$, $(x_1,\ldots,x_5)=(-2,-1,0,1,2)$ and $y_i=1-x_i^2/12$.

\vspace{-0cm}
\begin{center}
\includegraphics{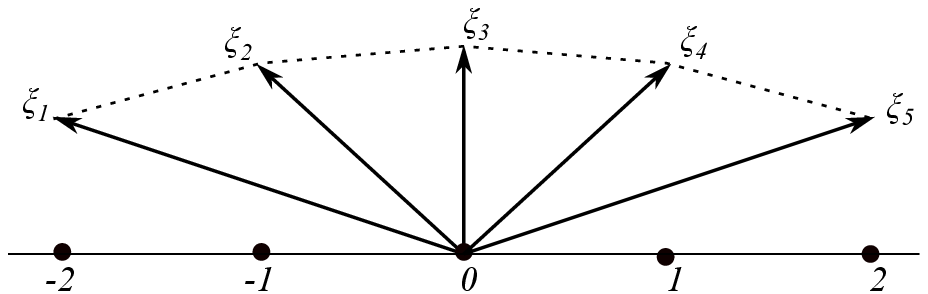}
\end{center}
\vspace{-0cm}

The \emph{zonogon} generated by $\Xi$ is the $2n$-gon being the Minkowski sum
of segments $[0,\xi_i]$, $i=1,\ldots,n$, i.e., the set
  $$
Z=Z(\Xi):=\{\lambda_1\xi_1+\ldots+ \lambda_n\xi_n\colon \lambda_i\in\Rset,\;
0\le\lambda_i\le 1,\; i=1,\ldots,n\}.
  $$
When the choice of $\Xi$ is not important to us (subject
to~\refeq{xi-x-y},\refeq{additional}), we may denote $Z$ as $Z(n,2)$. Each
subset $X\subseteq[n]$ is identified with the point $\sum_{i\in X} \xi_i$ in
$Z$ (due to~\refeq{additional}(ii), different subsets are identified with
different points).

Besides $\xi_1,\ldots,\xi_n$, we use the vectors $\eps_{ij}:=\xi_j-\xi_i$ for
$1\le i<j\le n$.

A \emph{combined tiling}, or a \emph{combi} for short, is a subdivision $K$ of
$Z=Z(\Xi)$ into convex polygons specified below and called \emph{tiles}. Any
two intersecting tiles share a common vertex or edge, and each edge of the
boundary of $Z$ belongs to exactly one tile.

We associate to $K$ the planar graph $(V_K,E_K)$ whose vertex set $V_K$ and
edge set $E_K$ are formed by the vertices and edges occurring in tiles. Each
vertex is (a point identified with) a subset of $[n]$. And each edge is a line
segment viewed as a parallel transfer of either $\xi_i$ or $\eps_{ij}$ for some
$i<j$. In the former case, it is called an edge of \emph{type} or \emph{color}
$i$, or an $i$-\emph{edge}, and in the latter case, an edge of \emph{type}
$ij$, or an $ij$-\emph{edge}. An $i$-edge ($ij$-edge) is directed according to
the direction of $\xi_i$ (resp. $\eps_{ij}$), and $(V_K,E_K)$ is the
corresponding directed graph. In particular, the \emph{left boundary} of $K$
(and of $Z$) is the directed path $v_0v_1\ldots v_n$ in which each vertex $v_i$
represents the interval $[i]$ (and the edge from $v_{i-1}$ to $v_i$ has color
$i$). And the \emph{right boundary} is the directed path $v'_0v'_1\ldots v'_n$
in which $v'_i$ represents the interval $[n+1-i..n]$.

In what follows, for disjoint subsets $A$ and $\{a,\ldots,b\}$ of $[n]$, we
will use the abbreviated notation $Aa\ldots b$ for $A\cup\{a,\ldots,b\}$, and
write $A-c$ for $A-\{c\}$ when $c\in A$.

There are three sorts of tiles in a combi $K$: $\Delta$-tiles, $\nabla$-tiles,
and lenses.
 \smallskip

I. A $\Delta$-\emph{tile} ($\nabla$-\emph{tile}) is a triangle with vertices
$A,B,C\subseteq[n]$ and edges $(B,A),(C,A),(B,C)$ (resp. $(A,C),(A,B),(B,C)$)
of types $j$, $i$ and $ij$, respectively, where $i<j$. We denote this tile as
$\Delta(A|BC)$ (resp. $\nabla(A|BC)$). See the left and middle fragments of the
picture.

\vspace{-0.2cm}
\begin{center}
\includegraphics{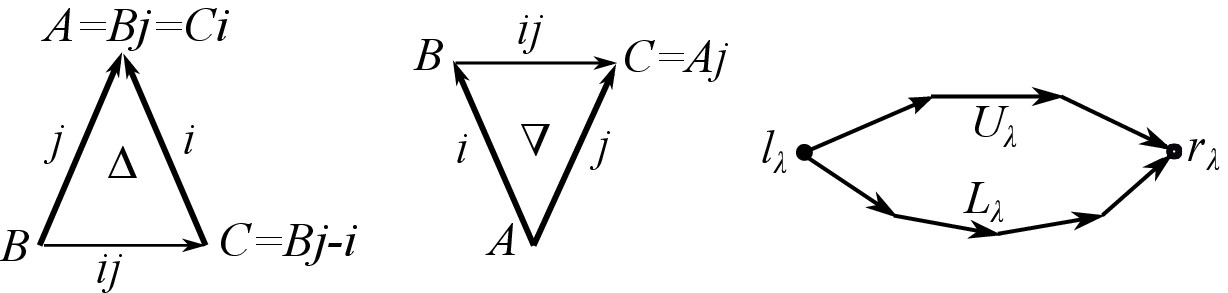}
\end{center}
\vspace{-0.2cm}

II. In a \emph{lens} $\lambda$, the boundary is formed by two directed paths
$U_\lambda$ and $L_\lambda$, with at least two edges in each, having the same
beginning vertex $\ell_\lambda$ and the same end vertex $r_\lambda$; see the
right fragment of the above picture. The \emph{upper boundary}
$U_\lambda=(v_0,e_1,v_1,\ldots,e_p,v_p)$ is such that $v_0=\ell_\lambda$,
$v_p=r_\lambda$, and $v_k=Xi_k$ for $k=0,\ldots,p$, where $p\ge 2$,
$X\subset[n]$ and $i_0<i_1<\cdots <i_p$ (so $k$-th edge $e_k$ is of type
$i_{k-1}i_k$). And the \emph{lower boundary}
$L_\lambda=(u_0,e'_1,u_1,\ldots,e'_q,u_q)$ is such that $u_0=\ell_\lambda$,
$u_q=r_\lambda$, and $u_m=Y-j_m$ for $m=0,\ldots,q$, where $q\ge 2$,
$Y\subseteq [n]$ and $j_0>j_1>\cdots>j_q$ (so $m$-th edge $e'_m$ is of type
$j_mj_{m-1}$). Then $Y=Xi_0j_0=Xi_pj_q$, implying $i_0=j_q$ and $i_p=j_0$. Note
that $X$ as well as $Y$ need not be a vertex in $K$. Due to the concavity
condition~\refeq{additional}(i), $\lambda$ is a convex polygon of which
vertices are exactly the vertices of $U_\lambda\cup L_\lambda$.
  \smallskip

Besides, we will deal with two derivatives of combies (cf.~\cite[Sect.~6.3]{DKK3}).
\smallskip

A. A \emph{quasi-combi} $K$ differs from a combi by the
condition that in each lens $\lambda$, either the upper boundary $U_\lambda$ or
the lower boundary $L_\lambda$ (not both) can consist of only one edge; we
refer to $\lambda$ as a \emph{lower semi-lens} in the former case, and as an
\emph{upper semi-lens} in the latter case. If, in addition, no two upper semi-lenses can share an edge, and similarly for the lower semi-lenses, then we say that a quasi-combi $K$ is \emph{fine}. Typically, a fine quasi-combi is produced from a
combi by subdividing each lens $\lambda$ of the latter into two semi-lenses of
different types (lower and upper ones) along the segment
$[\ell_\lambda,r_\lambda]$. See the left fragment of the picture.

\vspace{-0.1cm}
\begin{center}
\includegraphics[scale=1.2]{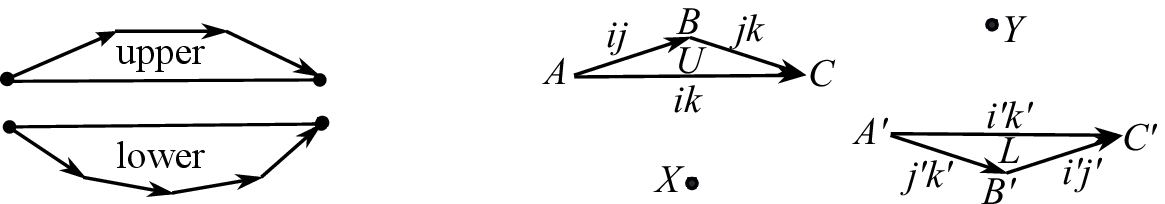}
\end{center}
\vspace{-0.1cm}

B. The second derivative is of most use in this paper. It was introduced
in~\cite{DKK3} under the name of a \emph{fully triangulated quasi-combi}, that
we will abbreviate as an \emph{ftq-combi}. In an ftq-combi $K$, all lenses are semi-lenses and, moreover, they are triangles. So an \emph{upper semi-lens} is a triangle
$U=U(ABC)$ formed by three vertices $A,B,C$ and three directed edges of types
$ij,jk,ik$ such that $i<j<k$. The vertices are expressed as $A=Xi$, $B=Xj$ and
$C=Xk$ for some $X\subset[n]$, called the \emph{root} of $U$ (which is not
necessarily a vertex of $K$). And a \emph{lower semi-lens} is a triangle
$L=L(A'B'C')$ formed by vertices $A',B',C'$ and directed edges of types
$j'k',i'j',i'k'$ such that $i'<j'<k'$. The vertices are viewed as $A'=Y-k'$,
$B'=Y-j'$ and $C'=Y-i'$ for some $Y\subseteq[n]$, called the \emph{root} of
$L$. See the middle and right fragments of the above picture. Typically, an ftq-combi is produced from a combi by subdividing each lens into one lower and one upper
semi-lenses (forming a fine quasi-combi as in A) and then subdividing the former
into lower triangles, and the latter into upper ones. Conversely, starting from an ftq-combi, if we choose, step by step, a pair of semi-lenses that share an edge and have the same type and replace them by their union, then we eventually obtain a fine quasi-combi (which preserves the set of vertices and does not depend on the choice of pairs in the process).
\smallskip

The picture below illustrates fragments of a combi (left) and an ftq-combi
(right); here lenses ($\lambda$ and $\lambda'$) and triangular semi-lenses are
drawn bold.

 \vspace{-0.2cm}
\begin{center}
\includegraphics[scale=0.8]{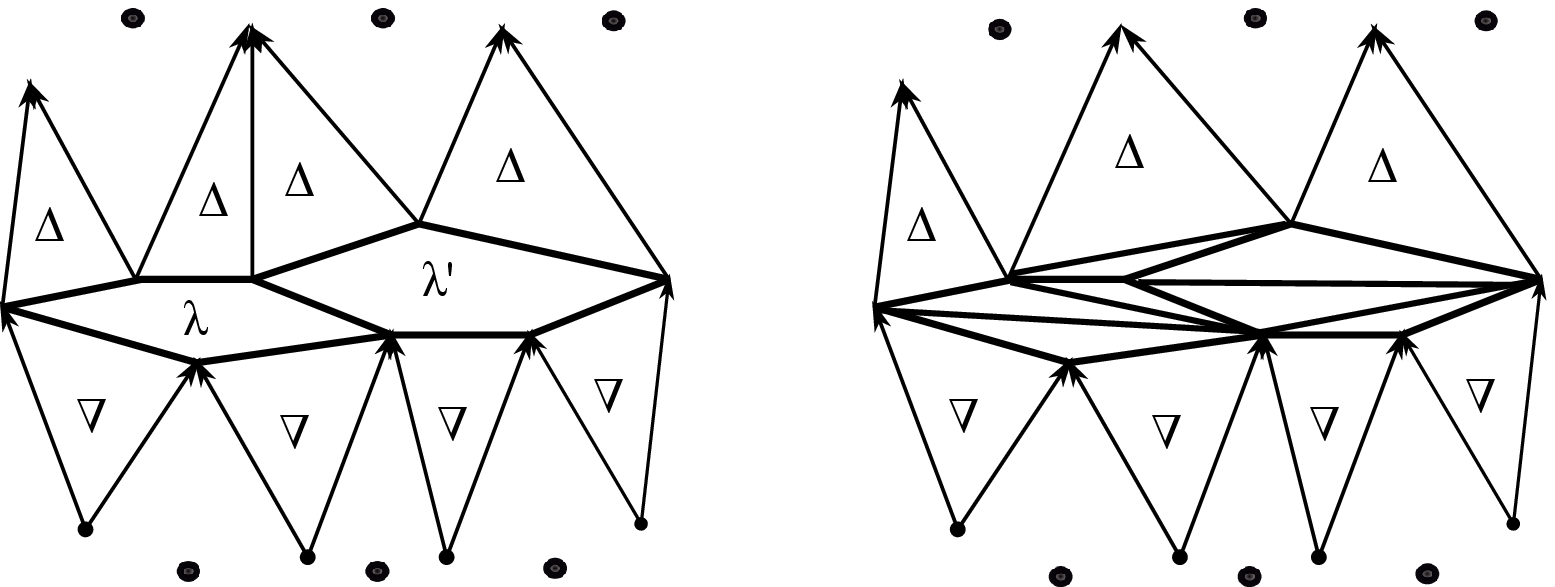}
\end{center}
\vspace{-0.2cm}

\noindent\textbf{Remark 2.} In the definition of a combi given in~\cite{DKK2},
the generators $\xi_i$ are assumed to have equal euclidean lengths. However,
taking generators subject to~\refeq{xi-x-y} does not affect, in essence, the
structure of combies, as well as results on them, and we may vary generators,
with a due care, when needed. To simplify visualizations, it is convenient to
think of edges of type $i$ as ``almost vertical'', while of those of type $ij$
as ``almost horizontal'' (since the values $\delta_i$ in~\refeq{xi-x-y} are
small). Note that any rhombus tiling turns into a combi without lenses in a
natural way: each rhombus is subdivided into two ``semi-rhombi'' $\Delta$ and
$\nabla$ by drawing the ``almost horizontal'' diagonal in it. Note that from
axioms~\refeq{xi-x-y},\refeq{additional}(i) it follows that
  \begin{numitem1} \label{eq:all_differ}
all vectors $\xi_1,\ldots,\xi_n$ and $\eps_{ij}$, $1\le i<j\le n$, are
different.
  \end{numitem1}
In particular, this implies that if some $\Delta$-tile and $\nabla$-tile share
an ``almost horizontal'' edge (i.e. they are of the form $\Delta(A|BC)$ and $\nabla(A'|B'C')$ with
$BC=B'C'$) then their union is a parallelogram. As a consequence, any ftq-combi
without semi-lenses is equivalent to a rhombus tiling (and vice versa).
 \medskip

The central result on combies shown in~\cite{DKK2} (which in turn relies on the
purity of ${\bf W}_n$ shown in~\cite{DKK1}) is that there is a one-to-one
correspondence between the set of combies $K$ on $Z(n,2)$ and the set ${\bf
W}_n$ of maximal w-collections $\Wscr$ in $2^{[n]}$; it is given by $K\mapsto
V_K=:\Wscr$. As a consequence,
  \begin{numitem1} \label{eq:ftq-combi}
for any ftq-combi $K$ on $Z(n,2)$, the set $V_K$ of vertices (regarded as
subsets of $[n]$) forms a maximal w-collection in $2^{[n]}$, and conversely,
any w-collection in $2^{[n]}$ is representable by the vertex set $V_K$ of some
ftq-combi $K$.
  \end{numitem1}

Next, in order to handle symmetric ws-collections, it is convenient to assume
that the set $\Xi$ of generating vectors $\xi_i=(x_i,y_i)$ is symmetric, in the
sense that:
  \begin{numitem1} \label{eq:symgen}
$x_i=-x_{i^\circ}$ and $y_i=y_{i^\circ}$ for each $i\in[n]$
  \end{numitem1}
cf.~\refeq{xi-x-y}. Note that in this case we may assume that
conditions~\refeq{additional}(i),(ii) continue to hold. (To provide this, we
first assign $Z_2$-independent numbers $x_i$ for $i=1,\ldots,m=\lfloor
n/2\rfloor$ so that $x_1<\cdots <x_m<0$, and accordingly define
$x_{m+1},\ldots,x_n$ by symmetry (where $x_{m+1}=0$ if $n$ is odd). Then assign
symmetric $y_1,\ldots,y_n$ (with $y_i=y_{i^\circ}$) so as to satisfy the
concavity condition~\refeq{additional}(i). Then slightly perturbing the values
$y_i$, if needed, we ensure that all 0,1,2-combinations of the numbers
$y_1,\ldots,y_m$ are different. One can see that the resulting vectors
$\xi_1,\ldots,\xi_n$ are $Z_2$-independent, yielding~\refeq{additional}(ii).)

When $n$ is even, \refeq{symgen} implies that the zonogon $Z:=Z(\Xi)$ admits
the reflection with respect to the horizontal line
  \begin{equation} \label{eq:M}
  M:=\{(x,y)\in Z\colon y=y_1+\cdots+y_{n/2}\},
  \end{equation}
called the \emph{middle line} of $Z$. Moreover, we observe that
  \begin{numitem1} \label{eq:mirrorA}
for any $A\subseteq[n]$, the sets $A$ and $A^\ast$ are symmetric w.r.t. $M$, or
$M$-\emph{symmetric} for short, which means that their corresponding points
$(x_A,y_A)$ and $(x_{A^\ast},y_{A^\ast})$ in $Z$ satisfy $x_A=x_{A^\ast}$ and
$y_A-y^M=y^M-y_{A^\ast}$, where $y^M:=y_1+\cdots+y_{n/2}$.
 \end{numitem1}

\noindent(In particular, $\emptyset$ is $M$-symmetric to $[n]$, and
$\{i,i^\circ\}$ is $M$-symmetric to $[n]-\{i,i^\circ\}$ for each $i\in[n]$).
Clearly if $A\subset[n]$ lies on $M$, then $|A|=n/2$, implying that the amounts
of poor and full pairs in $A$ are equal: $|\Pi_0(A)|=|\Pi_2(A)|$. Moreover, in
view of~\refeq{pi012} and~\refeq{additional}(ii), for any $A\subset[n]$, the
points $(x_A,y_A)$ and $(x_{A^\ast},y_{A^\ast})$ coincide if and only if
$\Pi_0(A)=\Pi_2(A)=\emptyset$. This gives the useful property that
  \begin{numitem1} \label{eq:sets-in-M}
all sets $A\subset[n]$ with $\Pi_0(A)=\Pi_2(A)=\emptyset$ and only these are
contained in $M$.
  \end{numitem1}
In other words, $M$ contains merely \emph{self-symmetric} sets $A=A^\ast$ and all these.

Also the middle line $M$ enables us to define an important class of ftq-combies
(extending the notion of $M$-symmetry to subsets of points in $Z$ in a natural
way).
 \medskip

\noindent\textbf{Definition.} Let $n$ be even. An ftq-combi $K$ on $Z$ is
called \emph{symmetric} if for any tile of $K$, its $M$-symmetric tile belongs
to $K$ as well. In particular, $V_K$ is symmetric.
 \medskip

We shall see in Sect.~\SEC{symm-w-even} that such ftq-combi do exist, and
moreover, they just give rise to all maximal symmetric w-collections in
$2^{[n]}$. On the other hand, no ``symmetric ftq-combi'' can be devised when
$n$ is odd, as we explain in Sect.~\SEC{symm-w-odd}.

\subsection{Cubillages on cyclic zonotopes of dimention 3.} \label{ssec:cubillage}

Cubillages arising when we deal with chord separated collections live within a
3-dimensional $n$-colored cyclic zonotope. To define the latter, consider a set
$\Theta$ of $n$ vectors $\theta_i=(t_i,1,\phi(t_i))\in \Rset^3$,
$i=1,\ldots,n$, such that
  \begin{numitem1} \label{eq:Theta}
$t_1<t_2<\cdots <t_n$ and $\phi(t)$ is a strictly convex function; for example,
$\phi(t)=t^2$.
  \end{numitem1}
(Note that for our purposes, for a vector $v=(a,b,c)\in\Rset^3$, it is more
convenient to interpret $b$ as the vertical coordinate (\emph{height}), $a$ as
the left-to-right coordinate, and $c$ as the \emph{depth} of $v$. So all
vectors in $\Theta$ have the unit height.) \noindent An example with $n=5$ is
illustrated in the picture (where $z_i=x_i^2$ and $x_i=-x_{6-i}$).

  \vspace{0cm}
\begin{center}
\includegraphics{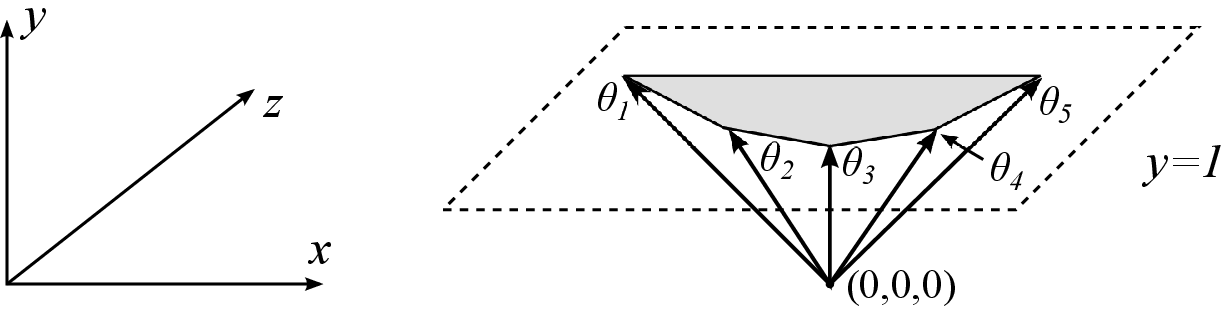}
\end{center}
\vspace{0cm}

The \emph{zonotope} $Z(\Theta)$ generated by $\Theta$ is the Minkowski sum of
line segments $[0,\theta_i]$, $i=1,\ldots,n$. Then a fine zonotopal tiling, or
a \emph{cubillage}, in terminology of~\cite{KV}, is (the polyhedral complex
determined by) a subdivision $Q$ of $Z(\Theta)$ into 3-dimensional
parallelotopes such that: any two intersecting ones share a common face, and
each face of the boundary of $Z(\Theta)$ is entirely contained in some of these
parallelotopes. For brevity, we refer to these parallelotopes as \emph{cubes},
and to $Q$ as a \emph{cubillage}.

Note that the choice of one or another cyclic configuration $\Xi$ (subject
to~\refeq{Theta}) is not important to us in essence, and we usually write
$Z(n,3)$ rather than $Z(\Theta)$, referring to it as the (cyclic 3-dimensional)
zonotope with $n$ colors.

Like the case of zonogons and combies, each vertex $v$ of a cubillage $Q$
(i.e., a vertex of some cube in it) is viewed as $\sum_{i\in X} \theta_i$ for
some subset $X\subseteq[n]$, and we identify such $v$ and $X$. The set of
vertices of $Q$ (as subsets of $[n]$) is called the \emph{spectrum} of $Q$ and
denoted as $V_Q$. One shows that $|V_Q|$ is equal to  $c_n$ as in~\refeq{gal},
and an important result due to Galashin establishes a relation of cubillages to
chord separation.

  \begin{theorem}[\cite{gal}] \label{tm:gal}
The correspondence $Q\mapsto V_Q$ gives a bijection between the set of
cubillages $Q$ on $Z(n,3)$ and the set ${\bf C}_n$ of maximal c-collections in
$2^{[n]}$.
  \end{theorem}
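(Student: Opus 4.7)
My plan is to establish the bijection $Q\mapsto V_Q$ by proving separately: (a) for every cubillage $Q$ the spectrum $V_Q$ is a chord separated collection of cardinality $c_n$, and (b) every maximal c-collection in $2^{[n]}$ arises as $V_Q$ for some cubillage. Combined with the purity statement~\refeq{gal}, (a) will force $V_Q$ to itself be a maximal c-collection, and the bijection will then follow after checking injectivity (which is not difficult: two different cubillages must contain different cubes, and the eight vertices of a cube in one must fail to coexist chord-separatedly with some vertex supplied by the other).

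For direction (a), I would first verify $|V_Q|=c_n$ by induction on $n$, decomposing any cubillage on $Z(n,3)$ into the ``slab'' of cubes containing a $\theta_n$-edge and the sub-cubillage on $Z(n-1,3)$ lying behind it. For chord separation of $V_Q$, the key tool is the notion of a \emph{membrane}: a 2-dimensional piecewise-linear subcomplex $M\subset Q$ which projects homeomorphically onto the zonogon $Z(n,2)$. Each membrane inherits a rhombus-tiling structure from the upper/lower faces of the cubes composing it, so its vertex set $V_M$ is strongly, and hence chord, separated. The crucial lemma I would prove is that for any two $A,B\in V_Q$ there is a membrane $M\subset Q$ with $A,B\in V_M$. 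I would establish this by starting from any membrane containing $A$ and performing local ``pivots'' of $M$ across individual cubes (swapping the three upper faces of a cube for its three lower faces or vice versa), using connectivity of the flip graph of membranes together with a monotonicity argument on the altitude of $M$ relative to $B$, until $B$ gets absorbed into $M$.

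For direction (b), I would argue by induction on $n$. Given a maximal c-collection $\Cscr\subseteq 2^{[n]}$, partition $\Cscr=\Cscr_0\sqcup\Cscr_1$ according to whether $n\in X$, and consider the shadow $\Cscr_1':=\{X-\{n\}\colon X\in\Cscr_1\}$. Since every $\theta_n$-edge of the sought cubillage forces both $V$ and $V\cup\{n\}$ to be vertices, one expects $\Cscr_1'\subseteq\Cscr_0$; this can be derived from chord separation by exploiting the role of $n$ as the maximal color. A count then gives $|\Cscr_1|=c_n-c_{n-1}=s_{n-1}$. A further combinatorial argument (relying on maximality of $\Cscr$) would show that $\Cscr_1'$ is in fact a maximal \emph{strongly} separated collection in $2^{[n-1]}$, hence corresponds to a rhombus tiling $T$ of the projection of $Z(n,3)$ along $\theta_n$. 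By induction $\Cscr_0$ corresponds to a cubillage $Q_0$ on $Z(n-1,3)$; gluing $Q_0$ to the slab swept by $T$ along $\theta_n$ yields the desired $Q$ with $V_Q=\Cscr$.

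The main obstacle, in my view, is the common-membrane lemma in direction (a). The 2D analog (any two vertices of a rhombus tiling lie on a common snake) is classical, but the 3D version requires a careful combinatorial argument on the flip graph of membranes plus a monotonicity quantity guaranteeing that sequences of flips terminate with both $A$ and $B$ on the same surface without oscillating. A secondary obstacle in direction (b) is to verify that $\Cscr_1'$ is strongly, not merely chord, separated in $2^{[n-1]}$: this uses the global chord separation of $\Cscr$ together with the special role of $n$ in a delicate way, and is essentially what ensures that the interface of the induction step carries an honest rhombus tiling rather than merely a chord separated configuration.
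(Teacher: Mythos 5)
This theorem is cited from Galashin~\cite{gal} and is not proved in the present paper, so there is no ``paper's own proof'' to compare against; I will evaluate your proposal on its own terms.

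The central step of your direction (a) is the ``common membrane lemma'': that for any two $A,B\in V_Q$ there is a membrane $M\subset Q$ containing both, where your membranes are built from upper/lower faces of cubes and hence carry a rhombus-tiling structure whose vertex set is \emph{strongly} separated. This lemma is false, and the reason is exactly the counting fact you intend to use alongside it. A rhombus tiling of $Z(n,2)$ has strongly separated vertex set of size $s_n=\binom{n}{2}+n+1$, while $|V_Q|=c_n=\binom{n}{3}+\binom{n}{2}+n+1$. For $n\ge 3$ one has $c_n>s_n$, so $V_Q$ cannot be strongly separated, and therefore $V_Q$ must contain pairs $A,B$ that are chord separated but \emph{not} strongly separated. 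No strong membrane (rhombus tiling) can contain such a pair, so no amount of flipping will absorb both vertices onto one such surface. If one instead works with \emph{weak} membranes in the fragmentation $\Qfrag$ (which allow the horizontal sections $S_1(C),S_2(C)$ and project to ftq-combies rather than rhombus tilings, cf.~\refeq{membr-combi}), the corresponding vertex sets are weakly separated, which still implies chord separated; but then the ``pivot'' move you describe (swap three upper faces of a cube for three lower ones) is no longer the right flip, and the monotonicity argument would have to be redone in the fragmented setting. In fact Galashin's argument for chord separation of $V_Q$ does not go through a common-membrane lemma at all; it is a direct geometric argument comparing the depth coordinates of $A$ and $B$ along vertical fibers of the projection.

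A secondary concern, which you partly flag yourself, is in direction (b): the claim that $\Cscr_1'=\{X-\{n\}\colon X\in\Cscr,\ n\in X\}$ is a maximal \emph{strongly} separated collection in $2^{[n-1]}$ needs a genuine argument and is not an easy consequence of chord separation of $\Cscr$ plus maximality. The analogous statement does hold (the $n$-pie of the sought cubillage is swept out by a strong membrane), but your sketch does not indicate how to derive the strong-separation conclusion from the chord-separation hypothesis, and naive attempts run into the same issue as above: two chord-separated sets agreeing on whether they contain $n$ need not be strongly separated after deleting $n$. You would need to use chord separation of $X$ against sets \emph{not} containing $n$ (together with maximality) to rule out the forbidden pattern $i<j<k$; this is the delicate part of the induction and would need to be written out in full.
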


For a closed subset $U$ of points in $Z=Z(n,3)$, the \emph{front} (\emph{rear})
side of $U$, denoted as $\Ufr$ (resp. $\Urear$), is defined to be the set of
points $(a,b,c)\in U$ such that $c\le c'$ (resp. $c\ge c'$) among all
$(a',b',c')\in U$ with $(a',b')=(a,b)$, i.e., consisting of the points of $U$
with locally minimal (resp. maximal) depths. In particular, $\Zfr$ ($\Zrear$)
denotes the front (rear) side of the entire zonotope $Z$; it is well-known that
the vertices occurring in $\Zfr$ ($\Zrear$) are exactly the intervals (resp.
co-intervals) in $[n]$.
 \medskip

Next, to handle symmetric c-collections, we will deal with a \emph{symmetric}
set $\Theta$ of generating vectors $\theta_i=(t_i,1,\phi(t_i)))$, which means
that
  \begin{numitem1} \label{eq:sym-theta}
$t_i=-t_{i^\circ}$ and $\phi(t_i)=\phi(-t_i)$ for each $i\in[n]$,
  \end{numitem1}
and consider the corresponding \emph{symmetric zonotope} $Z(\Theta)$. It turns
out that, in contrast to the situation when symmetric (ftq)-combies exist only
for $n$ even, symmetric cubillages on a symmetric cyclic zonotope do exist in
both even and odd cases, as we shall see in Sects.~\SEC{symm-c-even}
and~\SEC{symm-c-odd}.


\section{Maximal symmetric w-collections: even case} \label{sec:symm-w-even}

In this section, we throughout assume that $n$ is even. Our goal is to prove
Theorem~\ref{tm:symm-ws} in this case. We consider a symmetric zonogon
$Z=Z(\Xi)\simeq Z(n,2)$, and an important role is played by the middle line $M$
in $Z$ (defined in~\refeq{M}).

We know (cf.~\refeq{sets-in-M}) that all points $A\subset[n]$ lying on $M$ are
self-symmetric, have size $n/2$, and admit only ordinary pairs. Consider two
distinct points $A,B$ in $M$. Since $|A|=|B|$, the symmetric difference
$A\triangle B$ ($=(A-B)\cup (B-A)$) has size at least 2. This is strengthened
as follows (this will be used in the next section):
 \begin{numitem1} \label{eq:neighAB}
if $A,B$ are self-symmetric and $|A\triangle B|=2$, then $A\triangle
B=\{i,i^\circ\}$ for some $i\in [n]$.
 \end{numitem1}
Indeed, if $A-B=\{i\}$, $B-A=\{j\}$ and $j\ne i^\circ$, then $j^\circ\in A-B$
(since each of $A,B$ contains exactly one elements of $\{j,j^\circ\}$). But
then $|A\triangle B|\ge 3$, a contradiction.
  \medskip

Next we prove the theorem (with $n$ even) as follows. Let $\Cscr$ be a
\emph{maximal by inclusion} symmetric w-collection in $2^{[n]}$ and suppose,
for a contradiction, that $|\Cscr|<s_n$. Extend $\Cscr$ to a maximal
(non-symmetric) w-collection $\Wscr\subset 2^{[n]}$. Then $|\Wscr|=s_n$, and in
view of~\refeq{ftq-combi}, there exists an ftq-combi $K$ on $Z$ whose vertex
set $V_K$ is exactly $\Wscr$.

Let $\Rscr=(R_0,R_1,\ldots,R_q)$  be the sequence of vertices of $K$ occurring
in $M$ and ordered from left to right. Note that $\Rscr$ is nonempty, since it
contains the vertex $[n/2]$ of the left boundary of $Z$, which is just $R_0$
(and the vertex $[(n/2+1)..n]$ of the right boundary of $Z$, which is $R_q$).
Consider two possible cases. (It should be noted that an idea of our analysis
in items~II and~III below is borrowed from Karpman's work~\cite{karp}.)
  \medskip

\noindent\underline{\emph{Case 1}:} Assume that the middle line $M$ of $Z$ is
covered by edges of $K$. Then (by the planarity and the construction of
ftq-combies) for each $p=1,\ldots,q$, the pair $e_p=(R_{p-1},R_p)$ forms an
edge of $K$. Moreover, $M$ separates the tiles of $K$ into two subsets $\Tscr$
and $\Tscr'$, where the former (latter) consists of the tiles lying in the half
of $Z$ below (resp. above) $M$. The intersection of these halves is just $M$,
and they are $M$-symmetric to each other. Now for each tile $\tau\in \Tscr$,
take the $M$-symmetric triangle $\tau^\ast$. Then the set $\Tscr''$ of such
tiles gives a subdivision of the half of $Z$ above $M$, and combining
$\Tscr$ and $\Tscr''$ (which have the same set of edges within $M$), we obtain a
symmetric ftq-combi $\tilde K$ on $Z$. Note also that if $A\in\Cscr$ is a
vertex in $\Tscr'$, then $A$ is a vertex of $\Tscr''$ as well (since the
symmetric set $A^\ast$ must be a vertex in $\Tscr$). Thus, $V_{\tilde K}$ is a
symmetric w-collection of size $s_n$ including $\Cscr$, contradicting the
maximality of $\Cscr$.
  \medskip

\noindent\underline{\emph{Case 2}:} Now assume that for some $1\le p\le q$, the
segment $\sigma$ of $M$ between the points $R_{p-1}$ and $R_p$ is not an edge
of $K$. Then there is a tile $\tau$ of $K$ with vertices $A,B,C$ such that one
vertex, $A$ say, is $R_{p-1}$, and $\sigma$ meets the edge connecting $B$ and $C$ at an interior point. Let for definiteness the point $B$ lies above $M$, and
$C$ below $M$. Our aim is to show that this is not the case.

A priori, $\tau$ can be one of the following shapes: $\Delta$-tile,
$\nabla$-tile, upper semi-lens, or lower semi-lens (defined in
Sect.~\SSEC{combi}). For reasons of symmetry, it suffices to consider the cases
when $\tau$ is either a $\nabla$-tile or an upper semi-lens. In the former case
$\tau$ is viewed as $\nabla(C|AB)$, while the latter case falls into two
subcases depending on the location of the vertex $A$; namely, $\tau$ is either
$U(ABC)$ or $U(CAB)$. So we have to consider three situations; they are illustrated in the picture (from left to right) and described in items I,\,II,\,III below. In order to analyze these situations, we use two auxiliary assertions.

\vspace{-0.2cm}
\begin{center}
\includegraphics[scale=1]{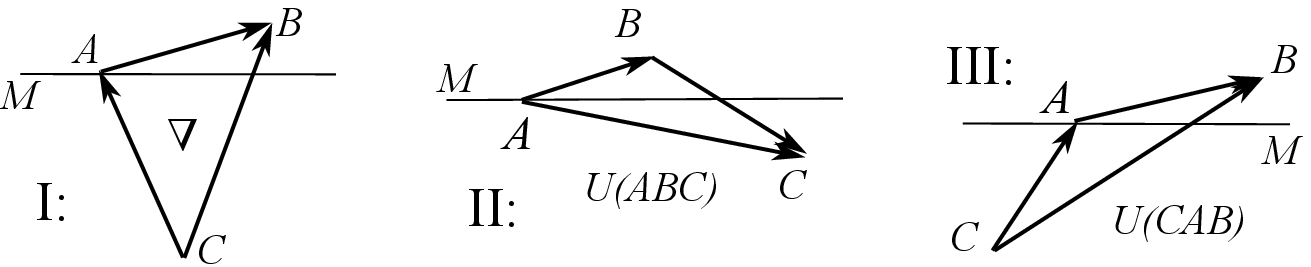}
\end{center}
\vspace{-0.2cm}

 \begin{numitem1} \label{eq:closeX}
If $S\subset[n]$ is such that each of $\Pi_0(S)$ and $\Pi_2(S)$ consists of at
most one pair, then $S$ and $S^\ast$ are weakly separated.
  \end{numitem1}
Indeed, we have $\Pi_2(S)=\{S-S^\ast\}$ and $\Pi_0(S)=\{S^\ast-S\}$
(cf.~\refeq{pi012}); so the assertion is immediate when some of $\Pi_0(S)$ and
$\Pi_2(S)$ is empty. And if $|\Pi_0(S)|=|\Pi_2(S)|=1$, then
$|S-S^\ast|=|S^\ast-S|=2$, whence $S$ and $S^\ast$ have the same size and one of them surrounds the other, again yielding the assertion.
  \begin{numitem1} \label{eq:XY}
If sets $S,T\subseteq[n]$ are weakly separated, then so are $S^\ast$ and
$T^\ast$.
  \end{numitem1}
Indeed, obviously, the weak separation of $S,T$ implies
that of $\bar S$ and $\bar T$, and that of $\{i^\circ \colon i\in S\}$ and
$\{i^\circ \colon i\in T\}$. Then the assertion follows from the fact that  the K-involution is the combination of the two involutions as in~\refeq{implicat}(i),(ii).

As a consequence of~\refeq{XY},
  \begin{numitem1} \label{eq:Ascr-S}
In a set $S\subseteq[n]$ is weakly separated from a symmetric w-collection
$\Cscr\subset 2^{[n]}$ and from the set $S^\ast$, then
$S^\ast$ is weakly separated from $\Cscr$ as well, implying that
$\Cscr\cup\{S,S^\ast\}$ is a symmetric w-collection.
  \end{numitem1}

I. We first consider the case $\tau=\nabla(C|AB)$. Then $|A|=|B|=m$ and $|C|=m-1$,
where $m=n/2$, and $A,B$ are expressed as $A=Ca$ and $B=Cb$ for some $a,b\in
[n]$. Obviously, $a<b$. Moreover, since $A$ lies on $M$, while $B$ above $M$,
we have $y_a=y_{a^\circ}<y_b=y_{b^\circ}$. Then, by the the concavity
condition~\refeq{additional}(i), the pair $\{a,a^\circ\}$ surrounds
$\{b,b^\circ\}$, and therefore
  \begin{numitem1} \label{eq:abba}
  ~either $a<b<b^\circ<a^\circ$ ~or ~$a<b^\circ<b<a^\circ$.
  \end{numitem1}

Note that the relations $A=Ca$, $B=Cb$ and $\Pi_0(A)=\Pi_2(A)=\emptyset$ give
  $$
  C\cap\{a,a^\circ,b,b^\circ\}=\{b^\circ\} \quad\mbox{and}\quad
  B\cap\{a,a^\circ,b,b^\circ\}=\{b,b^\circ\}.
  $$

Then $B^\ast\cap\{a,a^\circ,b,b^\circ\}=\{a,a^\circ\}$ (in view of
$\Pi_i(B^\ast)=\Pi_{2-i}(B)$). It follows that $C-B^\ast$ contains the element
$b^\circ$, whereas $B^\ast-C$ contains the elements $a,a^\circ$ surrounding
$b^\circ$ (by~\refeq{abba}). This together with $|B^\ast|=m>|C|$ implies that
$B^\ast$ and $C$ are not weakly separated. Therefore, $B^\ast$ is not in
$\Wscr$. On the other hand, since $\Pi_0(B)$ consists of the only pair
$\{a,a^\circ\}$, and $\Pi_2(B)$ of the only pair $\{b,b^\circ\}$, the sets
$B,B^\ast$ are weakly separated from each other (by~\refeq{closeX}). Also $B$ is  weakly separated from $\Cscr$ (since $\{B\}\cup \Cscr\subseteq V_K=\Wscr$). But then $\Cscr\cup\{B,B^\ast\}$ is a symmetric w-collection (by~\refeq{Ascr-S}), and now the maximality of $\Cscr$ implies $B,B^\ast\in \Cscr$, contrary to $B^\ast\notin\Wscr$.
 \medskip

II. Next we consider the case $\tau=U(ABC)$. Then $|A|=|B|=|C|$. Also $A=Xa$,
$B=Xb$ and $C=Xc$ for some $X\subset[n]$ and $a,b,c\in[n]$. Obviously, $a<b<c$.
Moreover, since $A$ lies on $M$, $B$ above $M$, and $C$ below $M$, it follows
from~\refeq{additional}(i) that $\{c,c^\circ\}$ surrounds $\{a,a^\circ\}$, and
the latter surrounds $\{b,b^\circ\}$. This is possible only if
 \begin{numitem1} \label{eq:cab}
 ~either $c^\circ<a<b<b^\circ<a^\circ<c$ ~or ~$c^\circ<a<b^\circ<b<a^\circ<c$.
 \end{numitem1}

Let $D:=\{a,a^\circ,b,b^\circ,c,c^\circ\}$. Then $\Pi_0(A)=\Pi_2(A)=\emptyset$ and  $a\in A\not\ni b,c$ imply $X\cap D=\{b^\circ,c^\circ\}$, whence
  $$
  B\cap D=\{b,b^\circ,c^\circ\}\quad \mbox{and}\quad C\cap
  D=\{b^\circ,c,c^\circ\}.
  $$

It follows that $B^\ast\cap D=\{a,a^\circ,c^\circ\}$. Then $C-B^\ast$ contains
$b^\circ,c$ and $B^\ast-C$ contains $a,a^\circ$, implying that $B^\ast$ and $C$
are not weakly separated since $a<b^\circ<a^\circ<c$ (cf.~\refeq{cab}). On the
other hand, one can see that $\Pi_0(B)=\{\{a,a^\circ\}\}$ and
$\Pi_2(B)=\{\{b,b^\circ\}\}$; therefore, $B$ and $B^\ast$ are weakly separated,
by~\refeq{closeX}. As in the previous case, we obtain that
$\Cscr\cup\{B,B^\ast\}$ is symmetric and weakly separated, yielding  $B,B^\ast\in\Cscr$ (by the maximality of $\Cscr$), contrary to $B^\ast\notin\Wscr$. 
\medskip

III. Finally, consider the case $\tau=U(CAB)$. Then $A=Xa$, $B=Xb$ and $C=Xc$
for some $X\subset[n]$ and elements $c<a<b$. Arguing as above, we observe that
 \begin{numitem1} \label{eq:cab2}
 ~either $c<a<b<b^\circ<a^\circ<c^\circ$ ~or ~$c<a<b^\circ<b<a^\circ<c^\circ$,
 \end{numitem1}
and for $D:=\{a,a^\circ,b,b^\circ,c,c^\circ\}$, we have
  $$
  B\cap D=\{b,b^\circ,c^\circ\},\quad B^\ast\cap D=\{a,a^\circ,c^\circ\}\quad
  \mbox{and}\quad C\cap D=\{b^\circ,c,c^\circ\}.
  $$

Then $C-B^\ast$ contains $b^\circ,c$ and $B^\ast-C$ contains $a,a^\circ$,
whence $B^\ast$ and $C$ are not weakly separated, in view of
$c<a<b^\circ<a^\circ$ (cf.~\refeq{cab2}). On the other hand, $\Cscr\cup\{B,B^\ast\}$ is symmetric and weakly separated, yielding  $B,B^\ast\in\Cscr$. 

This completes the proof of Theorem~\ref{tm:symm-ws} when $n$ is even. \hfill
\qed\qed
\medskip

\noindent\textbf{Remark 3.} Assertion~\refeq{symm-str} on the purity of
symmetric strongly separated collections in $2^{[n]}$ with $n$ even is proved
in a similar way (and even simpler, using an observation that if $A\subseteq[n]$ is strongly separated from $A^\ast$, then at least one of $\Pi_0(A)$ and $\Pi_2(A)$ must be empty). On this way, given a maximal by inclusion symmetric
s-collection $\Ascr\subset 2^{[n]}$, we extend it to a maximal s-collection
$\Sscr$ and take the combi $K$ without lenses (equivalent to a rhombus tiling)
with $V_K=\Sscr$. The situation when the middle line $M$ is not fully covered
by edges of $K$ is again impossible (now an analysis of the only case
$\tau=\nabla(C|AB)$ is sufficient, repeating part~I of the above proof). And
when $M$ is covered by edges of $K$, we replace the subcombi of $K$ above $M$
in a due way (as described in Case~1 of the proof), obtaining a symmetric combi without lenses (viz. rhombus tiling) whose vertex set includes
$\Ascr$. This gives~\refeq{symm-str} in the even case:
 \begin{numitem1} \label{eq:symm-even-ss}
when $n$ is even, all inclusion-wise maximal symmetric s-collections in
$2^{[n]}$ have the same cardinality, which is equal to $s_n$.
 \end{numitem1} 

We finish this section with one more assertion that will be used in the next
section:
    \begin{numitem1} \label{eq:n/2}
for $n$ even, if an ftq-combi $K$ on the symmetric zonogon $Z(n,2)$ has a path
$P$ covering the middle line $M$, then this path contains exactly $n/2$ edges.
  \end{numitem1}

Indeed, let $R_0,R_1,\ldots,R_q$ be the sequence of vertices of $P$, and let $e_p$ denote
the edge from $R_{p-1}$ to $R_p$. Then $|R_{p-1}\triangle R_p|=2$, and
by~\refeq{neighAB}, $e_p$ is congruent to the vector
$\eps_{ii^\circ}=\xi_{i^\circ}-\xi_{i}$ for some $i\in[n/2]$. The sum of these
vectors over $M$ is equal to the difference of $R_q$ and $R_0$ (regarded as
vectors), namely, $\sum_{i=n/2+1}^n \xi_i-\sum_{i=1}^{n/2}\xi_i$. This is just
equal to $\sum(\eps_{ii^\circ}\colon i\in[n/2])$, whence the result easily
follows.


\section{Maximal symmetric w-collections: odd case} \label{sec:symm-w-odd}

In this section we prove Theorem~\ref{tm:symm-ws} when $n$ is odd, $n=2m+1$. It
is convenient to deal with the set of colors in the symmetrized form, using
notation $[-m..m]$ ($=\{-m,\ldots,-1,0,1,\ldots,m\}$) introduced in
Sect.~\SEC{prelim}. The proof is based on a reduction to the above result with
an even number of colors, namely, $[-m..m]^{-}$
($=\{-m,\ldots,-1,1,\ldots,m\}$).

Let $\Cscr$ be a symmetric w-collection in $2^{[-m..m]}$. Consider the
partition $\Cscr=\Cscr'\sqcup \Cscr''$, where
  $$
  \Cscr':=\{A\in \Cscr\colon 0\notin A\}\quad \mbox{and} \quad
  \Cscr'':=\{A\in \Cscr\colon 0\in A\}.
  $$
Note that for any $A\subseteq [-m..m]$, ~$|A|+|A^\ast|=2m+1$ and exactly one of
$A,A^\ast$ contains the element $0$ (in view of $0^\circ=0$). So for
$A\in\Cscr'$, its symmetric set $A^\ast$ belongs to $\Cscr''$, and vice versa.
Also $A$ and $A^\ast$ coincide on the ordinary pairs $\{i,i^\circ\}$ (i.e.,
such that $i\ne 0$ and $|\{i,i^\circ\}\cap A|=1$), and are complementary on the poor and full pairs:
  $$
  \Pi_0(A^\ast)=\Pi_2(A) \quad\mbox{and} \quad \Pi_2(A^\ast)=\Pi_0(A);
  $$
cf.~\refeq{pi012} (recall that the ``pair'' $\{0,0^\circ\}$ is regarded as poor or full). Observe that
 \begin{numitem1} \label{eq:AAm}
$|A|\le m$ if $A\in\Cscr'$, and $|A|\ge m+1$ if $A\in\Cscr''$.
  \end{numitem1}
Indeed, let $A\in\Cscr'$; then $0\in A^\ast-A$. If $A$ has no full pair, then $|A|<|A^\ast|$ follows from the fact that the ordinary pairs for $A$ and $A^\ast$ are the same. And if $A$ contains a full pair, then this pair  belongs to $A-A^\ast$ and surrounds the element 0. Since $A,A^\ast$ are weakly separated, we have $|A|\le|A^\ast|$ (and this inequality is strict since $|A|+|A^\ast|=2m+1$).

In particular, $|A|=m$ and $|A^\ast|=m+1$ if $\Pi_2(A)=\emptyset$ and $\Pi_0(A)=\{\{0,0^\circ\}\}$; in this case, we say that the pair $\{A,A^\ast\}$ is \emph{squeezed} (an analog of self-symmetric sets for $n$ even).

Now form the collections $\Dscr',\Dscr'',\Dscr$ of subsets of $[-m..m]^-$ as
  $$
  \Dscr':=\Cscr',\quad \Dscr'':=\{A-0\colon A\in\Cscr''\},\quad
  \mbox{and} \quad
  \Dscr:=\Dscr'\cup\Dscr''.
  $$

For convenience, we will denote the K-involution on sets in $[-m..m]^-$ with
symbol $\natural$ (to differ from the K-involution $\ast$ for $[-m..m]$). We
observe that
  \begin{numitem1} \label{eq:Dsymm-ws}
the collection $\Dscr$ is $\natural$-symmetric (i.e., symmetric w.r.t. $\natural$) and weakly separated.
  \end{numitem1}
Indeed, $\Cscr$ is partitioned into symmetric pairs $\{A,A^\ast\}$, where
$A\in\Cscr'$ and $A^\ast\in\Cscr''$. Then $A\in \Dscr'$ and
$A^\ast-0\in\Dscr''$. One can see that $A^\ast-0$ is just $A^\natural$.
Therefore, $\Dscr$ is $\natural$-symmetric. Next, let $A,B\in\Dscr$. Obviously, $A,B$ are
weakly separated if both are either in $\Dscr'$ or in $\Dscr''$. So assume that
$A\in\Dscr'$ and $B\in\Dscr''$. Then $A\in\Cscr'$ and $B0\in\Cscr''$. Since
$\Cscr$ is a w-collection and $|A|\le m<|B0|$ (by~\refeq{AAm}), either $A$ and
$B0$ are strongly separated, or they are weakly separated and $A$ surrounds
$B0$. In the former case, $A$ and $B$ are strongly separated, while in the
latter case, the inequality $|A|\le|B|$ ensures that $A$ and $B$ are weakly
separated.

We call $\Dscr$ the \emph{contraction} of $\Cscr$, and call the operation of
getting rid of color 0 as above the \emph{contraction operation} on $\Cscr$. A
converse operation handles a symmetric w-collection $\Dscr$ in $2^{[-m..m]^-}$
and lifts it to a collection $\Escr$ in $2^{[-m..m]}$, as follows.

Represent the sets $A\in\Dscr$ as points in the symmetric zonogon $Z=Z(\Xi)$
with $\Xi=\{\xi_{-m},\ldots,\xi_{-1},\xi_1,\ldots,\xi_m\}$ and define $\Dscr'$
($\Dscr''$) to consist of those points $A\in\Dscr$ that lie below the middle
line $M$ or on $M$ (resp. above $M$ or on $M$). They determine the collections
$\Escr',\Escr''$ in $2^{[-m..m]}$ by
  $$
  \Escr':=\Dscr'\quad \mbox{and} \quad \Escr'':=\{A0\colon A\in\Dscr''\}.
  $$
In particular, if $A\in\Dscr$ is self-symmetric (viz. lies on $M$), then
$A\in\Dscr'\cap\Dscr''$ and $A$ determines the squeezed pair $\{A,A^\ast=A0\}$
in $[-m..m]$.

We call $\Escr:=\Escr'\cup\Escr''$ the \emph{expansion} of $\Dscr$ using color
0. The following properties are valid:
  \begin{numitem1} \label{eq:Esymm-ws}
$\Escr$ is symmetric and weakly separated;
  \end{numitem1}
 \begin{numitem1} \label{eq:CDE}
if $\Cscr\subset 2^{[-m..m]}$ is a symmetric w-collection, $\Dscr$ is the
contraction of $\Cscr$, and $\Escr$ is the expansion of $\Dscr$ using color 0,
then $\Escr=\Cscr$.
  \end{numitem1}

We check these properties and simultaneously finish the proof of the theorem by
using the geometric construction from the previous section.

More precisely, we extend the contraction $\Dscr$ of $\Cscr$ to a maximal
$\natural$-symmetric w-collection $\Wscr$ in $2^{[-m..m]^-}$ and take a $\natural$-symmetric
ftq-combi $K$ on the zonogon $Z=Z(\Xi)$ such that $V_K=\Wscr$. Let
$\Dscr',\Dscr''$ be defined as above. Then $\Dscr'$ represents a subset of
vertices of $K$ in the lower half $\Zlow$ of $Z$ (up to $M$), and $\Dscr''$ is
the set $M$-symmetric to $\Dscr'$, which lies in the upper half $\Zup$ of $Z$.

Let $\Klow$ and $\Kup$ be the parts (subcomplexes) of $K$ contained in $\Zlow$
and $\Zup$, respectively. Then $\Klow\cap\Kup$ gives a directed path $P$ on $M$
consisting of $m+1$ vertices and $m$ edges, say, $P=R_0R_1\cdots R_m$
(cf.~\refeq{n/2}). Moreover, by~\refeq{neighAB}, each edge $e_p:=(R_{p-1},R_p)$
is of type $ii^\circ$ for some $-m\le i\le-1$.

Now consider the larger zonogon $Z'=Z(\Xi')$, where $\Xi'$ is obtained by
adding to $\Xi$ the vertical vector $\xi_0=(0,y_0)$. Equivalently, $Z'$ is
formed by splitting $Z$ along $M$, keeping the part $\Zlow$ of $Z$, moving the
part $\Zup$ by $y_0$ units in the vertical direction, and filling the gap
between $\Zlow$ and $\Zup+\xi_0$ by the rectangle $F$ congruent to
$M\times\xi_0$. Accordingly, the part $\Kup$ of $K$ is moved by $\xi_0$, thus
transforming each vertex $A$ of $\Kup$ into $A0$, and we subdivide $F$ into the
sequence of rectangles $F_1,\ldots,F_m$, where $F_p$ is congruent to $e_p\times
\xi_0$. This results in a ``pseudo-combi'' $K'$, with the natural involution on
the vertices which brings each $A\in V_{K'}$ to its $M'$-symmetric vertex $A'$,
where $M'$ is the updated middle line $M'$ with $y^{M'}=y^M+y_0/2$. Clearly
$A'=A^\ast$, and the converse transformation $Z'\mapsto Z$ returns the
ftq-combi $K$.

Finally, we can transform $K'$ into a correct, though not symmetric, ftq-combi
$K''$ on $Z'$, by subdividing each rectangle $F_p$ into four triangles. Namely,
using the fact that the edge $e_p$ of $P$ has type $ii^\circ$ for some $-m\le
i\le -1$, the devised triangles are viewed as (using notation from
Sect.~\SSEC{combi})
   \begin{equation} \label{eq:rect-4triang}
   \nabla(R_{p-1}|R^\ast_{p-1} v_p),\quad \nabla(R_{p}|v_p R^\ast_p), \quad
   \Delta(v_p|R_{p-1}R_p), \quad L(R^\ast_{p-1} v_p R^\ast_p),
   \end{equation}
where $v_p$ is the corresponding point (viz. subset of $[-m..m]$) in the
interior of $F_p$ (so that the edge $(R_{p-1},v_p)$ has color $i^\circ$, and
$(R_p,v_p)$ color $i$). See the picture.

\vspace{-0.2cm}
\begin{center}
\includegraphics[scale=1.0]{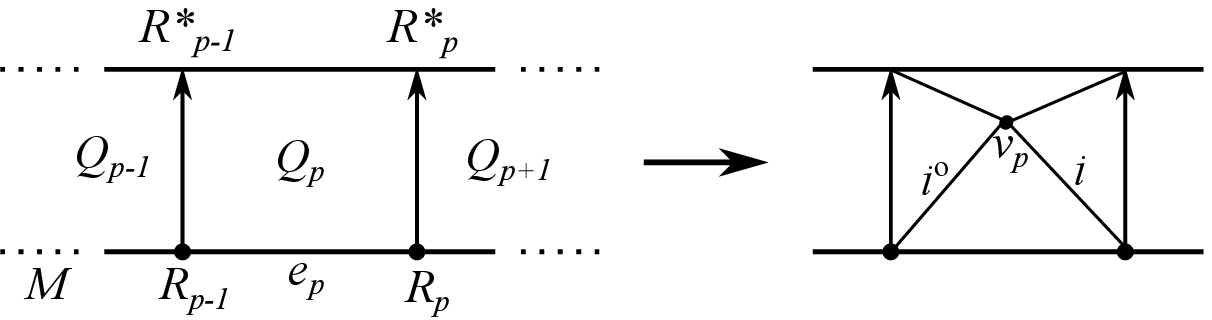}
\end{center}
\vspace{-0.2cm}

As a result, the initial collection $\Cscr$ is included in
$\Wscr':=V_{K''}-\{v_1,\ldots, v_m\}$ and coincides with the expansion $\Escr$
of $\Dscr$, yielding~\refeq{Esymm-ws} and~\refeq{CDE}. The collection $\Wscr'$
in $2^{[-m..m]}$ is symmetric and weakly separated. It has size
$|V_{K''}|-m=s_{2m+1}-m$, and the result follows. \hfill\qed\qed
 \medskip

\noindent\textbf{Remark 4.} The above method can be applied (in a simpler form) to \emph{strongly} separated collections for $n$ odd. Namely, for a symmetric s-collection $\Cscr\subset 2^{[-m..m]}$, we form the collections $\Cscr',\Cscr'',\Dscr',\Dscr'',\Escr',\Escr''$ as described above. (Note that properties~\refeq{AAm} and~\refeq{Dsymm-ws} easily follow from the fact that $\Pi_2(A)=\emptyset$ for each $A\in\Cscr'$, which is provided by the strong separation of $\Cscr$.) Extending the contraction $\Dscr=\Dscr'\cup\Dscr''$ of $\Cscr$ to a maximal symmetric s-collection $\Sscr$ in $2^{[-m..m]^-}$, we take the corresponding symmetric rhombus tiling $T$ on $Z$ with $V_T=\Sscr$. Acting as above, we move the ``upper'' parts of $Z$ and $T$ (lying above $M$) by the vector $\xi_0$ and for each vertex $R_i$ on $M$, add the vertical  edge $u_i$ connecting $R_i$ and $R_i+\xi_0$. The difference with the above construction for ftq-combies is that, instead of replicating the horizontal edges $(R_{i-1},R_i)$ lying on $M$, we now simply split each symmetric rhombus between $R_{i-1}$ and $R_i$ into two triangles and move the upper one (of $\Delta$ type) by $\xi_0$. This together with the vertical edges $u_{i-1}$ and $u_i$ produces a symmetric hexagon. As a result, we obtain a ``pseudo'' rhombus tiling $T'$ in which the middle tiles are formed by symmetric hexagons, not rhombi. The transformation $T\mapsto T'$ is illustrated in the picture where $m=2$. Note that $T'$ can be transformed into a rhombus tiling (which is not symmetric) by subdividing each middle hexagon into three rhombi (by one of two possible ways); such a subdivision is shown  by dotted lines in the picture. Note that the subdivision adds $m$ new vertices.
 
 \vspace{-0.2cm}
\begin{center}
\includegraphics[scale=0.8]{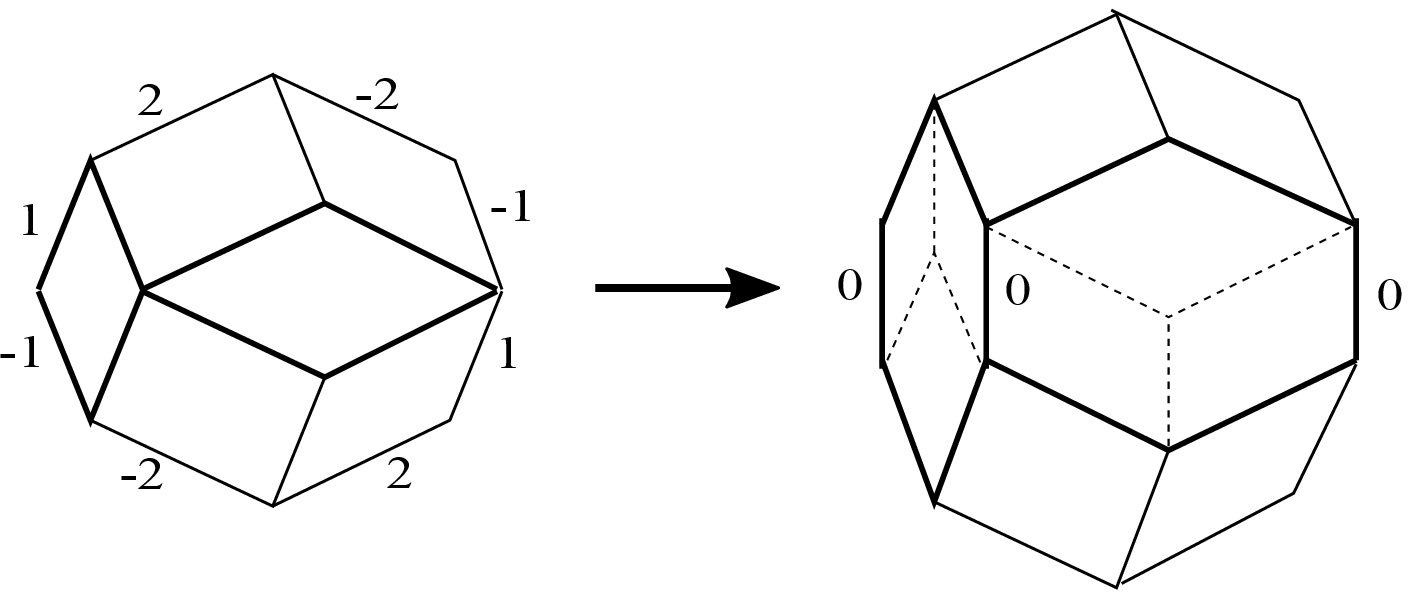}
\end{center}
\vspace{-0.2cm}

As a result, we obtain assertion~\refeq{symm-str} in the odd case, namely:
 \begin{numitem1} \label{eq:symm-odd-ss}
when $n$ is odd, all inclusion-wise maximal symmetric s-collections in
$2^{[n]}$ have the same cardinality, equal to $s_n-(n-1)/2$.
 \end{numitem1}


\section{Maximal symmetric c-collections: even case} \label{sec:symm-c-even}

In this section we prove Theorem~\ref{tm:symm-chord} when the number $n$ of
colors is even, $n=2m$. Our method of proof uses a reduction to symmetric
weakly separated collections and combies. We will deal with the set of colors
given in the symmetrized form, namely, $\mmmm$ (which means
$\{-m,\ldots-1,1,\ldots,m\}$).

Let $\Cscr$ be a maximal symmetric chord separated collection in $2^\mmmm$. We
have to show that $|\Cscr|=c_{2m}$ (where $c$ is defined in~\refeq{gal}).
Suppose, for a contradiction, that this is not so: $|\Cscr|<c_{2m}$. Extend
$\Cscr$ to a (non-symmetric) c-collection $\Fscr\subseteq 2^\mmmm$ of size
$c_{2m}$ and represent $\Fscr$ as the spectrum (vertex set) $V_{Q}$ of a
cubillage $Q$ in the corresponding zonotope; such $\Fscr$ and $Q$ exist by
Galashin's result (Theorem~\ref{tm:gal}).

More precisely, we consider the zonotope $Z=Z(\Theta)$ generated by the set
$\Theta$ of vectors $\theta_i=(t_i,1,\phi(t_i))$, $i\in\mmmm$, subject
to~\refeq{Theta}. Recall that the second coordinate of a point in $\Rset^3$ is
thought of as the height of this point. From the symmetry conditions on
$\Theta$ (namely, $i^\circ=-i$, $t_{i^\circ}=-t_i$ and
$\phi(t_{i^\circ})=\phi(t_i)$) it follows that:
  \smallskip

(a) $Z$ is centrally symmetric w.r.t. the point
$\zeta_Z$ that is the half sum of vectors in $\Theta$, called the
\emph{center} of $Z$ (i.e., $\zeta_Z=(0,m,\phi(t_1)+\cdots+\phi(t_m))$);
more precisely, $v=(a,b,c)\in Z$ implies $\omega(v):= 2\zeta_Z-(a,b,c)\in Z$;
and
\smallskip

(b) $Z$ is symmetric w.r.t. the vertical plane $\{(a,b,c)\in\Rset^3\colon
a=0\}$; namely, $v=(a,b,c)\in Z$ implies $\nu(v):=(-a,b,c)\in Z$.
  \medskip

Let $L$ be the line segment in $Z$ going through the center $\zeta_Z$
orthogonal to the plane as in~(b). Then $L$ connects the vertices of $Z$
representing the sets (intervals) $[-m..-1]$ and $[1..m]$; we call $L$ the
\emph{axis} of $Z$. The involutions $\omega$ and $\nu$ as in~(a) and~(b) commute, and taking their composition, we have one more involution $\mu$ on $Z$; it preserves the first coordinate and gives a symmetry w.r.t. $L$. A nice property of $\mu$ (which is easy to verify) is that
  \begin{numitem1} \label{eq:mu}
$\mu$ sends each set $A\subseteq\mmmm$ (regarded as a point) to the symmetric
set $A^\ast$; in particular, $A$ lies on the axis $L$ if and only if $A$ is
self-symmetric: $A=A^\ast$.
  \end{numitem1}

So $\mu$ can be thought of as a linear extension of the K-involution on
$2^\mmmm$; we call it the \emph{geometric K-involution}. (Note also that $\mu$
swaps the front side $\Zfr$ and the rear side $\Zrear$ of $Z$.)

One more object important to us is the section $\Omega$ of $Z$ by the
horizontal plane $H_m:=\{(a,b,c)\in \Rset^3\colon b=m\}$; it contains the
center $\zeta_Z$ and axis $L$ and we call it the \emph{plate} in $Z$. This $\Omega$
divides $Z$ into two halves $\Zlow$ and $\Zup$, which lie below and above
$\Omega$, respectively, and intersect by $\Omega$. The symmetry $\mu$
swaps $\Zlow$ and $\Zup$.

In its turn, the axis $L$ divides the plate (disk) $\Omega$ into two halves, symmetric
to each other by $\mu$, and the boundary of $\Omega$ is partitioned into two
piece-wise linear paths $\Omegafr$ and $\Omegarear$ connecting the vertices
$[-m..-1]$ and $[1..m]$, where the former lies in $\Zfr$, and the latter in
$\Zrear$.

In fact, we wish to transform the cubillage $Q$ as above into a symmetric
cubillage $Q'$ keeping the collection $\Cscr$ in its spectrum, where we call a
cubillage on $Z$ symmetric if it is stable under $\mu$. Then $V_{Q'}$ is
symmetric, whence $V_{Q'}=\Cscr$, and we are done.

The task of constructing the desired $Q'$ is reduced to handling certain weakly
separated collections and ftq-combies. This relies on a method involving weak
membranes in fragmentized cubillages developed in~\cite[Sect.~6]{DKK3}. We now interrupt our description for a while to briefly review the notions and constructions
needed to us.
\medskip

\noindent \textbf{Fragmentation and weak membranes.} For an arbitrary $n$, let $Q$ be a cubillage on the zonotope $Z\simeq Z(n,3)$
generated by vectors $\theta_1,\ldots,\theta_n$ as in~\refeq{Theta}. A cube $C$ in $Q$ may be denoted as $(X|T)$, where $X\subset[n]$ is the lowest vertex, and $T$ the triple of edge colors, $i<j<k$ say, in $C$.

By the \emph{fragmentation} of $Q$ we mean the complex $\Qfrag$ obtained by
cutting $Q$ by the horizontal planes $H_\ell:=\{(a,b,c)\in\Rset^3\colon
b=\ell\}$ for $\ell=1,\ldots,n-1$. These planes subdivide each cube $C=(X|T)$
into 3 pieces $\Cfrag_1,\Cfrag_2,\Cfrag_3$, where $\Cfrag_h=C_h$ is the portion of
$C$ between $H_{|X|+h-1}$ and $H_{|X|+h}$. So $\Cfrag_1$ is a simplex with
the bottom vertex $X$, $\Cfrag_3$ is a simplex with the top vertex $X\cup T$,
and $\Cfrag_2$ is an octahedron; see the picture (where the objects are slightly slanted). 

\vspace{-0.2cm}
\begin{center}
\includegraphics[scale=1.0]{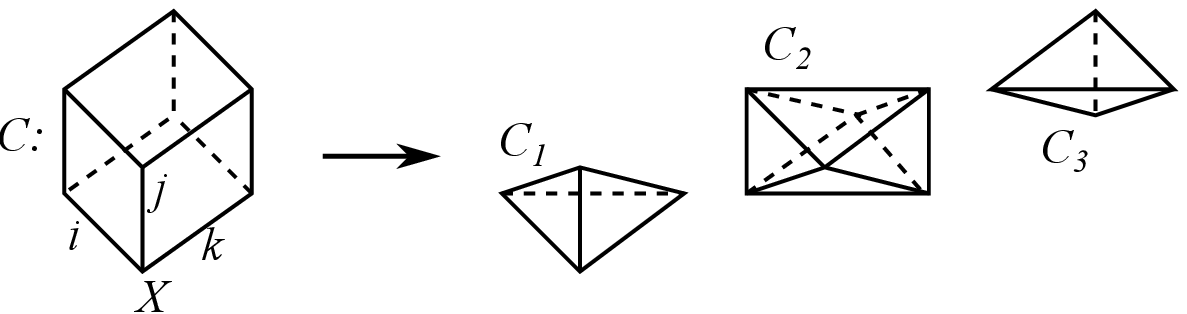}
\end{center}
\vspace{-0.2cm}

A \emph{facet} of $\Qfrag$ is meant to be a
facet of any \emph{fragment} $\Cfrag_h$ of a cube $C$ in $Q$. This is a
triangle of one of two sorts: either a horizontal triangle (section)
$S_h(C):=C\cap H_{|X|+h}$, $h=1,2$, or a half of the parallelogram forming a face
of $C$, that we conditionally call a ``vertical'' triangle. Note that the triple of vertices of a horizontal facet $F$ are of the form either $Xi,Xj,Xk$ or $Y-i,Y-j,Y-k$ for some $X,Y\subseteq[n]$ and $i<j<k$, and $i,j,k$ are just the colors of the cube containing $F$ as a section. This implies that
 \begin{numitem1} \label{eq:hor-sect}
any horizontal facet $F$ in $\Qfrag$ determines both fragments sharing $F$, as well as the cube separated by $F$ (among all cubillages whose fragmentation contains $F$).
 \end{numitem1}
(This need not be so for a vertical facet.)
Consider the projection $\pi^\rho:\Rset^3\to\Rset^2$ defined by
  \begin{numitem1} \label{eq:project}
$(a,b,c)\mapsto (x,y)$, where $x:=a$ and $y:=b-\rho c$
  \end{numitem1}
for a sufficiently small real $\rho>0$. Observe that $\pi^\rho$ maps the
generators $\theta_1,\ldots,\theta_n$ of the zonotope $Z=Z(\Theta)\simeq
Z(n,3)$ to generators $\xi_1,\ldots,\xi_n$ (respectively) as in
Sect.~\SSEC{combi}; we may assume, w.l.o.g., that the latter generators
satisfy~\refeq{xi-x-y},\refeq{additional}, and therefore, the image by
$\pi^\rho$ of $Z$ is the zonogon $Z'=Z(\Xi)\simeq Z(n,2)$ (here the concavity condition~\refeq{additional}(i) is provided by the convexity of $\phi$ in~\refeq{Theta} and the relation $\rho>0$). Note that for points
$(a,b,c),(a,b,c')\in\Rset^3$ if $c<c'$, then their images $(x,y)$ and $(x',y')$
(respectively) satisfy $x=x'$ and $y>y'$. This implies
that under the projection $\pi^\rho$, \emph{each horizontal facet of $\Qfrag$ becomes ``fully seen'' (slightly from behind), and therefore forms a non-degenerate triangle in $Z'$}.
  \medskip

\noindent\textbf{Definition.} A 2-dimensional subcomplex (``surface'') $N$ in
$\Qfrag$ is called a \emph{weak membrane}, or a \emph{w-membrane} for short, of
$Q$ if $\pi^\rho$ bijectively projects $N$ onto $Z'$. When $N$ has only
vertical triangles (but no horizontal ones), $N$ is called a \emph{strong
membrane}.
  \medskip

Additional explanations: Let $N$ contain a facet $F$ of a fragment $\Cfrag_h$
of a cube $C=(X\,|\,T=(i<j<k))$. Then: (a) if $F$ is the section $S_1(C)$
(resp. $S_2(C)$), then $\pi^\rho(F)$ is an upper (resp. lower) semi-lens with edges
of types $ij,jk,ik$; and (b) if $F$ is the lower (upper) half of a facet
(``rhombus'') of $C$, then $\pi^\rho(F)$ is a $\nabla$-tile (resp.
$\Delta$-tile) with the same edge colors as those of $F$. 

As is explained in~\cite[Sects.~6.3,7]{DKK3}, w-membranes are closely related to ftq-combies:
  \begin{numitem1} \label{eq:membr-combi}
for a cubillage $Q$ on $Z$ and a w-membrane $N$ in $\Qfrag$, the projection $\pi^\rho(N)$ (regarded as a complex) forms an ftq-combi on the zonogon $Z'$, and conversely,
for any ftq-combi $K$ on $Z'$, there exist a cubillage $Q$ on $Z$ and a
w-membrane $N$ in $\Qfrag$ such that $\pi^\rho(N)=K$.
  \end{numitem1}

Particular cases of membranes are the front side $\Zfr$ and the rear side
$\Zrear$ of $Z$ (see Sect.~\SSEC{combi}), regarded as complexes in which each
facet (``rhombus'') is subdivided (fragmented) into two halves; both membranes are strong.

The picture below illustrates the simplest case when the cubillage consists of only one cube $C$, i.e., $n=3$ and $Z=C$; here there are four w-membranes $N_1\simeq\Cfr$, $N_2,\,N_3,\,N_4\simeq\Crear$, and the horizontal facets in $N_2$ and $N_3$ are dark.

\vspace{-0.2cm}
\begin{center}
\includegraphics[scale=0.9]{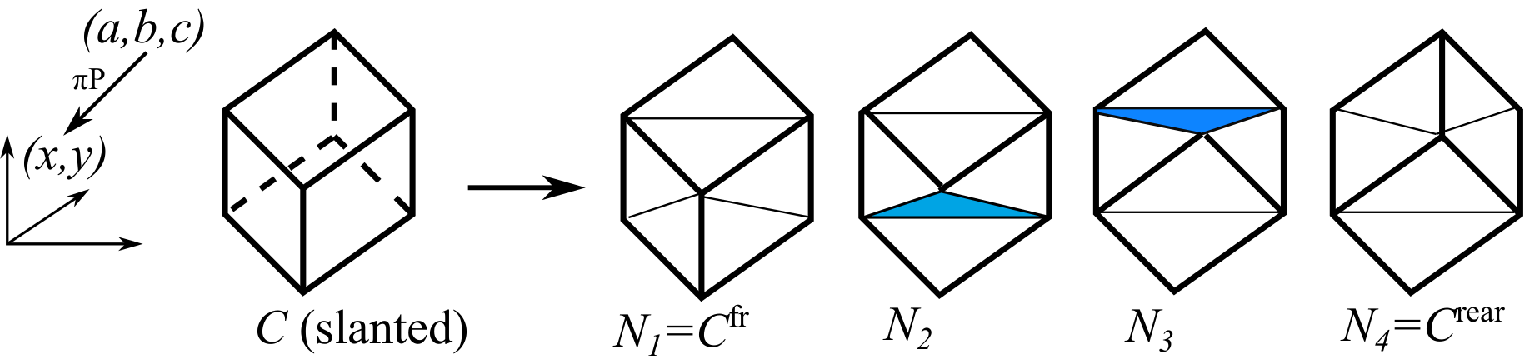}
\end{center}
\vspace{-0.2cm}

Next we return to the initial cubillage $Q$ with $\Cscr\subset V_Q$ on the
symmetric zonotope $Z=Z(\Theta)$ with $n=2m$ colors. In its fragmentation $\Qfrag$, we distinguish
the particular weak membrane $\Ndiam$ as follows:
  \begin{numitem1} \label{eq:Ndiam}
$\Ndiam=\Zfr_\uparrow \cup Q_\Omega\cup \Zrear_\downarrow$, where $Q_\Omega$ is formed by the facets of $\Qfrag$ lying on the plate $\Omega$;
~$\Zfr_\uparrow$ is the part of the front side $\Zfr$  of $Z$ contained in the upper half $\Zup$; and $\Zrear_\downarrow$ is the part of the rear side $\Zrear$ contained in the lower half $\Zlow$.
  \end{numitem1}

This $\Ndiam$ is indeed a w-membrane (taking into account that the plate $\Omega$ becomes ``fully seen'' under $\pi^\rho$, whence the projection $\pi^\rho$ is injective on $\Ndiam$; the fact that $\pi^\rho(\Ndiam)=Z'$ is easy). Also one can see that $\Zfr_\uparrow$ is symmetric to $\Zrear_\downarrow$. The pieces (disks) $\Zfr_\uparrow$ and $Q_\Omega$ share
the path $\Omegafr$ (connecting the vertices $[-m..-1]$ and $[1..m]$ and lying on $\Zfr$), while $Q_\Omega$ and $\Zrear_\downarrow$ share
the rear boundary path $\Omegarear$ of $\Omega$. Then $K:=\pi^\rho(\Ndiam)$ is
an ftq-combi on $Z'\simeq Z(2m,2)$ in which the part $\pi^\rho(\Zfr_\uparrow)$
is symmetric to $\pi^\rho(\Zrear_\downarrow)$, and the image by $\pi^\rho$ of
the axis $L$ of $Z$ is just the middle line $M$ of $Z'$.

In the rest of the proof we rely on the simple fact analogous to~\refeq{XY} for
the weak separation: if $A,B\subseteq \mmmm$ are chord separated, then so are
$A^\ast,B^\ast$. This implies:
  \begin{numitem1} \label{eq:C-AA}
for the maximal symmetric c-collection $\Cscr$ as above, if $A\in 2^\mmmm$ is
chord separated from $\Cscr$ and from $A^\ast$, then $A,A^\ast\in\Cscr$.
  \end{numitem1}
Indeed, one can see that $A^\ast$ is chord separated from $\Cscr$ (using the
symmetry of $\Cscr$). Then $\Cscr\cup\{A,A^\ast\}$ is chord separated, and the maximality of $\Cscr$ implies $A,A^\ast\in\Cscr$.

Next we proceed as follows. By reasonings in Sect.~\SEC{symm-w-even}, the middle line $M$ in $K=\pi^\rho(\Ndiam)$ is covered by edges of $K$, and
combining the part $\Klow$ of $K$ below $M$ with its $M$-symmetric complex
$(\Klow)^\ast$ (where $\ast$ stands for the symmetry in $Z'$), we obtain a
correct symmetric ftq-combi $K'$ on $Z'$. (Note that the part
$\pi^\rho(\Zfr_\uparrow)$ of $\Klow$ is already symmetric to the part
$\pi^\rho(\Zrear_\downarrow)$ of $\Kup$; so $K'$ and $K$ coincide within these parts.) Each set $A\in V_{\Klow}$ belongs to the spectrum $V_Q$, and
therefore it is chord separated from $\Cscr$. Also the sets $A,A^\ast$ are
chord separated (since both belong to the same ftq-combi $K'$, whence they are even weakly separated). Then $A,A^\ast\in \Cscr$, by~\refeq{C-AA}.
It follows that
 \begin{numitem1} \label{eq:QOmega} 
the set of \emph{vertices} of $Q$ lying on the
plate $\Omega$, i.e., $V_{Q_\Omega}$, is self-symmetric (w.r.t. $L$) and
belongs to $\Cscr$, implying that the vertex set of $K$ is symmetric. 
  \end{numitem1}
  
\noindent\textbf{Remark 5.} Note that a priori~\refeq{QOmega} does not guarantee that the subcomplex $Q_\Omega$ itself (or the set of triangles it it) 
is self-symmetric. (The reason is that in a quasi-combi, a semi-lens with more than three vertices can be triangulated in several different ways.) If this were so, then the proof could be finished as follows.
Take the part $\Qfrag_\downarrow$ of $\Qfrag$ below $\Omega$ (which subdivides
$\Zlow$) and replace the part of $\Qfrag$ above $\Omega$ by the complex
symmetric to $\Qfrag_\downarrow$. These two complexes coincide
within the plate $\Omega$ (due to the symmetry of $Q_\Omega$), and we can conclude that their union gives the correct fragmentation of some symmetric
cubillage $Q'$ on $Z$. (Here we rely on the fact, due to~\refeq{hor-sect}, that  any horizontal triangle in $Q_\Omega$ uniquely determines the cube having this triangle as a section.) Then $V_{Q'}\supseteq\Cscr$ would imply the result.
  \medskip

We argue in a different way, using a trick of \emph{perturbing}
the plate $\Omega$ described below.

To slightly simplify our description, we identify the w-membrane
$\Ndiam$ and its image by $\pi^\rho$, the ftq-combi $K=\pi^\rho(\Ndiam)$. Let
$\Gamma=(\Vscr,\Escr)$ be the graph whose vertices are the semi-lenses
(viz. horizontal triangles) on $\Omega$ and where two semi-lenses are connected by
edge in $\Gamma$ if they share an edge and are of the same type,
i.e., both are lower or both are upper ones. Let $\Phi$ be  a connected component of $\Gamma$ formed by \emph{upper} semi-lenses. Considering semi-lenses sharing edges, we can conclude that all semi-lenses in $\Phi$ have the same root $X\subset\mmmm$, i.e., each vertex occurring in a semi-lens there is of the form $Xi$ with $i\in\mmmm$ (see Sect.~\SSEC{combi}). Each  triangle $\tau\in \Phi$, having edges of types $ij,jk,ik$ for $i<j<k$ say, is the section $S_h(C)$ at
level $h=1$ of some cube $C=C(\tau)$ in $Q$; more precisely, $C$ is of the form $(X\,|\,T=\{i,j,k\})$, i.e., $C$ has the
lowest point at the vertex $X$ in $Q$, and edges of colors $i,j,k$. Then $\tau$ is the horizontal facet of the fragment $\Cfrag_1$ in $\Qfrag$ with the bottom vertex $X$ of height $|X|=m-1$ in $Z$.

The union of triangles $\tau\in\Phi$, denoted by $\frakU_\Phi$, forms a (possibly big) upper semi-lens. This is nothing else than an  upper semi-lens in the corresponding fine quasi-combi $\tilde K$ related to $K$. In its turn, the union of  fragments $\Cfrag_1$ over the cubes $C=C(\tau)$ for $\tau\in\Phi$ forms the convex truncated cone $D_\Phi$ with the bottom vertex $X$ and the upper (horizontal) side $\frakU_\Phi$.

Symmetrically, for each connected component $\Psi$ formed by \emph{lower} semi-lenses in $\Gamma$, all semi-lenses have the same ``upper'' root $Y$ (i.e., their vertices are expressed as $Y-i$), and each triangle $\sigma\in\Psi$ is the
section $S_2(C)$ at level 2 of some cube $C=C(\sigma)$ with the top vertex $Y$ (which is of height $|Y|=m+1$ in $Z$). Then the union $\frakL_\Psi:=\cup(\sigma\in\Psi)$ is a lower semi-lens as well (which is a tile in the fine quasi-combi $\tilde K$). And the union $F_\Psi$ of the upper fragments (simplexes) $\Cfrag_3(\sigma)$ over $\sigma\in\Psi$ forms the truncated cone with the top vertex $Y$ and the lower (horizontal) side $\frakL_\Psi$.

Note that the fine quasi-combi $\tilde K$ related to the ftq-combi $K$ is symmetric (since it is determined by the vertex set $V_K$, which is symmetric, by \refeq{QOmega}). This implies that the involution $\mu$ on $\Omega$ sends each upper semi-lens of $\tilde K$ to a lower one (and vice versa), which in turn implies that the corresponding lower and upper cones are symmetric to each other: $F_\Psi=\mu(D_\Phi)$.
The picture below illustrates such cones when  $|\Phi|=2$.

\vspace{-0.2cm}
\begin{center}
\includegraphics[scale=0.8]{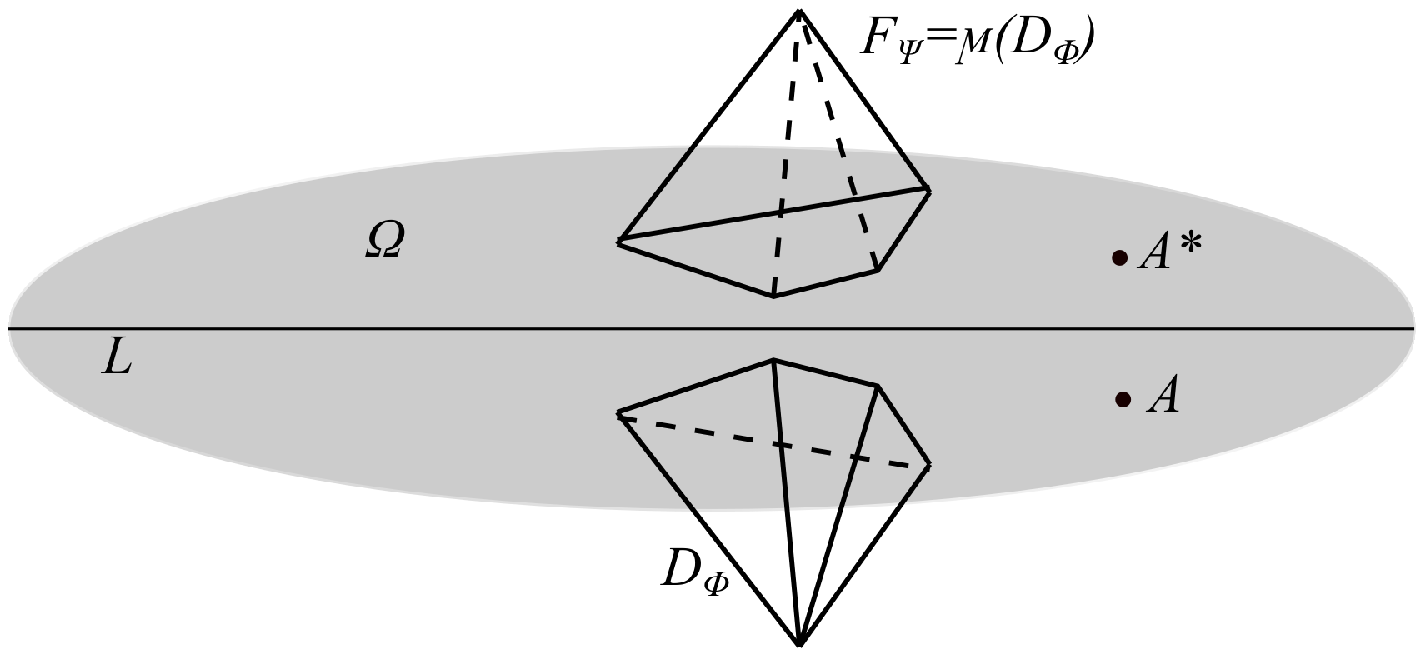}
\end{center}
\vspace{-0.5cm}

Using the above constructions, we perturb the plate $\Omega$ as follows. For each component $\Phi$ of upper type in $\Gamma$, the interior of the big semi-lens $\frakU_\Phi$ is replaced by the side surface of the cone $D_\Phi$ (as though squeezing out a ``pit'' in place of this semi-lens). Similarly, for each component $\Psi$ of lower type, the interior of  $\frakL_\Psi$ is replaced by the side surface of the cone $F_\Psi$ (making a ``peak''). Let $\tilde \Omega$ denote the resulting surface (which has the same boundary as in $\Omega$ but consists of a gathering of pits and peaks, without horizontal pieces at all).  Then the above-mentioned correspondence between the lower and upper cones implies that the perturbed $\tilde \Omega$ is symmetric w.r.t. the axis $L$. 

Now take the sub-cubillage $Q'$ of $Q$ formed by all cubes whose bottom vertex is of height at least $m-1$. Then $\tilde\Omega$ is exactly the lower boundary
of $Q'$. The symmetry of $\tilde\Omega$ implies that the cubillage $Q''$
symmetric to $Q'$ has $\tilde\Omega$ as the upper boundary. As a result, $Q'\cup Q''$ is a correct symmetric cubillage on $Z$ containing
$\Cscr$.

This contradicts the assumption that $\Cscr$ is maximal and $|\Cscr|<c_{2m}$, completing  the proof of Theorem~\ref{tm:symm-chord} for $n$ even.
\hfill\qed\qed
 \medskip

As a consequence of the above proof, any maximal c-collection $\Cscr$ in
$2^\mmmm$ is representable as the spectrum $V_Q$ of a symmetric cubillage $Q$ on $Z(2m,3)$ (which is determined by $\Cscr$, due to Theorem~\ref{tm:gal}). In particular, this implies that
  \begin{numitem1} \label{eq:sym-Omega}
the subcomplex (facet structure) $Q_\Omega$ of $\Qfrag$ on the plate $\Omega$ is
symmetric.
  \end{numitem1}
  
We finish this section with one more observation. As is shown in~\cite{DKK3} (and quoted in~\refeq{membr-combi}), for any maximal w-collection $\Wscr\subseteq 2^{[n]}$, there exist a cubillage $Q$ and a w-membrane $N$ in its fragmentation $\Qfrag$ such that $V_N=\Wscr$ (moreover, one shows there how to construct such $Q$ and $N$ efficiently). In particular,  $V_Q$ gives a maximal c-collection including $\Wscr$. A symmetric counterpart of such a relation between w- and c-collections is as follows.

\begin{theorem} \label{tm:symm-w-c}
For $n$ even, any maximal symmetric w-collection $\Wscr\subseteq 2^{[n]}$ can be represented as the spectrum (vertex set) of a symmetric w-membrane $N$ in the fragmentation of some symmetric cubillage $Q$ on $Z(n,3)$ (and such $Q$ and $N$ can be constructed efficiently). In particular, $V_Q$ is a maximal symmetric c-collection including $\Wscr$. Moreover, if $K$ is an arbitrary symmetric ftq-combi with $V_K=\Wscr$, then $Q$ and $N$ can be chosen so that $K=\pi^\rho(N)$.
  \end{theorem}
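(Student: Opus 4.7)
The plan is to construct the required symmetric cubillage $Q$ and w-membrane $N$ by essentially inverting the argument of the proof of Theorem~\ref{tm:symm-chord}, combining the cubillage-to-membrane correspondence~\refeq{membr-combi} with the cone-perturbation mechanism used there. As a preparatory step, I first observe that since $V_K = \Wscr$ is a maximal symmetric w-collection, the impossibility analysis of Sect.~\SEC{symm-w-even} (items~I--III of Case~2 there) forces the middle line $M$ of $Z' = Z(n,2)$ to be fully covered by edges of $K$; consequently $K$ decomposes as $\Klow \cup \Kup$ with $\Kup$ the $M$-reflection of $\Klow$, and no tile of $K$ straddles $M$.

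Next, I would apply~\refeq{membr-combi} to $K$ to obtain some cubillage $Q_0$ on $Z = Z(n,3)$ and a w-membrane $N_0$ in $\Qfrag_0$ with $\pi^\rho(N_0) = K$. I aim to arrange $N_0$ to take the canonical shape $\Zfr_\uparrow \cup (N_0)_\Omega \cup \Zrear_\downarrow$ (as for $\Ndiam$ in the proof of Theorem~\ref{tm:symm-chord}); the absence of tiles of $K$ straddling $M$ makes this compatible with the structure of $K$. Then I would symmetrize $Q_0$ via the cone-perturbation mechanism: the fine quasi-combi $\tilde K$ associated to $K$ is symmetric (being determined by the symmetric vertex set $V_K$), so each upper ``pit'' in $Q_0$ above the perturbed plate $\tilde\Omega$ pairs via $\mu$ with a lower ``peak'' below $\tilde\Omega$. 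Taking $Q_0^+$ to be the sub-cubillage of $Q_0$ consisting of cubes with bottom vertex of height $\ge m-1$, its lower boundary is the $\mu$-symmetric surface $\tilde\Omega$, and therefore $Q := Q_0^+ \cup \mu(Q_0^+)$ is a symmetric cubillage on $Z$. Setting $N := \Zfr_\uparrow \cup Q_\Omega \cup \Zrear_\downarrow$ gives a symmetric w-membrane in $\Qfrag$ satisfying $\pi^\rho(N) = K$ by construction. Finally, by Theorem~\ref{tm:symm-chord} applied to $Q$, the spectrum $V_Q$ is a maximal symmetric c-collection, and $V_Q \supseteq V_N = V_K = \Wscr$.

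The main obstacle I anticipate is in the second step: the result~\refeq{membr-combi} only guarantees the existence of some cubillage-membrane pair, without controlling the shape of $N_0$. Arranging that $N_0$ be of the specific form $\Zfr_\uparrow \cup (N_0)_\Omega \cup \Zrear_\downarrow$ may require building $Q_0$ directly (rather than invoking~\refeq{membr-combi} as a black box), for instance by an algorithmic procedure that processes the tiles of $K$ in order of vertex heights and assembles compatible cubes of $Q_0$ accordingly; such an explicit construction would simultaneously yield the ``efficiency'' assertion. A secondary concern is verifying that $Q_0^+ \cup \mu(Q_0^+)$ is a valid cubillage (no gaps or overlaps near the middle layer), which reduces to the $\mu$-symmetry of $\tilde\Omega$ and ultimately rests on the pit-peak pairing established in the proof of Theorem~\ref{tm:symm-chord}.
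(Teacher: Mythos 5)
Your proposal has a genuine gap, which you candidly flag yourself: you want $N_0$ to take the canonical shape $\Zfr_\uparrow\cup(N_0)_\Omega\cup\Zrearlow$, but~\refeq{membr-combi} gives no control over the shape of the membrane, and you only gesture at an algorithmic fix. Worse, even if you could force $N_0$ into that shape, the construction would no longer be aimed at the \emph{given} symmetric ftq-combi $K$: the theorem asks for $\pi^\rho(N)=K$ for an \emph{arbitrary} symmetric $K$ with $V_K=\Wscr$, whereas the $\Ndiam$-shaped membrane projects to one very particular combi determined by $Q_0$. After your symmetrization and redefinition $N:=\Zfr_\uparrow\cup Q_\Omega\cup\Zrearlow$, the claim ``$\pi^\rho(N)=K$ by construction'' is unjustified.

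The key idea you are missing is that no reshaping of $N_0$ and no cone-perturbation is needed. A w-membrane $N$ with $\pi^\rho(N)=K$ is determined \emph{as a set of facets in $Z(n,3)$} by $K$ alone: each vertex of $K$ is a subset $X\subseteq[n]$, hence a specific point $\sum_{i\in X}\theta_i$ in $Z(n,3)$, and each tile of $K$ lifts to the convex hull of the corresponding points. Since $\pi^\rho$ intertwines the 3-dimensional involution $\mu$ with the $M$-reflection of the zonogon, the $M$-symmetry of $K$ (as a complex) immediately gives the $\mu$-stability of $N$. From here the paper's argument is short: take any cubillage $Q'$ with a w-membrane $N$ projecting to $K$ (from~\refeq{membr-combi}), let $B$ be the part of $\Qpfrag$ between $\Zfr$ and $N$, and glue $B$ with $\mu(B)$ (which subdivides the region between $N$ and $\Zrear$, using that $\mu$ swaps $\Zfr$ and $\Zrear$). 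The consistency of the gluing along the horizontal facets of $N$ is exactly what~\refeq{hor-sect} provides: two fragments sharing a horizontal facet must belong to the same cube. This yields a symmetric cubillage $Q$ with $N\subset\Qfrag$ and $\pi^\rho(N)=K$ directly, with no intermediate ``canonical-shape'' step and no use of the cone-perturbation machinery (which is tailored to the converse problem in Theorem~\ref{tm:symm-chord}, where one starts from a cubillage without a symmetric membrane and must manufacture one).
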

 \begin{proof}
For a maximal symmetric w-collection $\Wscr\subseteq 2^{[n]}$ with $n$ even, take a symmetric ftq-combi $K$ with $V_K=\Wscr$ (which exists and can be constructed as described in Case~1 of the proof of Theorem~\ref{tm:symm-ws} in Sect.~\SEC{symm-w-even}). This $K$ is the projection by $\pi^\rho$ of a w-membrane $N$ in some cubillage $Q'$ on $Z=Z(n,3)$; see~\refeq{membr-combi}. The symmetry of $K$ implies that $N$ (regarded as a complex) is stable under the involution $\mu$ on $Z$. This and the fact that the front side $\Zfr$ and the rear side $\Zrear$ of $Z$ are symmetric to each other (by $\mu$) imply that the part $Z'$ of $Z$ between $\Zfr$ and $N$ is symmetric to the part $Z''$ between $N$ and $\Zrear$. Now take the sub-complex $B$ of the fragmentation $\Qpfrag$ lying in $Z'$. Then $\mu(B)$ gives a subdivision of $Z''$ respecting $N$. In view of~\refeq{hor-sect}, any pair of fragments of $B$ and $\mu(B)$ sharing  a horizontal facet in $N$ must belong to the same cube. Using this, one can conclude that  the union of $B$ and $\mu(B)$ constitutes a well-defined fragmentation of a symmetric cubillage $Q$ on $Z$ containing $N$, whence the theorem follows. 
 \end{proof}


\section{Maximal symmetric c-collections: odd case} \label{sec:symm-c-odd}

In this section we prove Theorem~\ref{tm:symm-chord} when the number $n$ of
colors is odd, $n=2m+1$. We assume that the set of colors is given in the
symmetrized form, namely, as $\mmm$ ($=\{-m,\ldots,-1,0,1,\ldots,m\}$). Our
goal is to show that any symmetric chord separated collection $\Dscr\subset
2^\mmm$ is included in the spectrum $V_Q$ of a symmetric cubillage $Q$ on the symmetric zonotope $Z(n,3)$, whence the theorem follows from Galashin's
results (\refeq{gal} and Theorem~\ref{tm:gal}). We will use a reduction to the
even case as in Sect.~\SEC{symm-c-even}.

First of all we partition $\Dscr$ as $\Dscr'\sqcup\Dscr''$, where
   $$
   \Dscr':=\{A\in\Dscr\colon 0\notin A\}\quad \mbox{and} \quad
   \Dscr'':=\{A\in\Dscr\colon 0\in A\}.
   $$
By the symmetry of $\Dscr$, the map $A\mapsto A^\ast 0$ gives a bijection between
$\Dscr'$ and $\Dscr''$.

Define the collections $\Cscr'$, $\Cscr''$, $\Cscr$ in $2^\mmmm$ (where, as
before, $\mmmm$ means $\mmm-\{0\}$) by
  $$
  \Cscr':=\Dscr',\quad \Cscr'':=\{A-0\colon A\in\Dscr''\},\quad \mbox{and}
  \quad \Cscr:=\Cscr'\cup\Cscr''.
  $$

In what follows, we will use symbol $\ast$ for the K-involution in $2^\mmm$,
and $\natural$ for that in $2^\mmmm$, and similarly for the geometric versions of these involutions. The symmetry of $\Dscr$ implies that of $\Cscr$. Also it is not difficult to see that
  \begin{numitem1} \label{eq:C-csep}
$\Cscr$ is chord separated;
  \end{numitem1}
we leave this, as well as verifications of assertions~\refeq{expanQ}
and~\refeq{E-csep} below, to the reader as an exercise.

As is shown in the proof of Theorem~\ref{tm:symm-chord} for the even case in the previous section,
$\Cscr$ is extendable to the spectrum $V_Q$ of a symmetric cubillage $Q$ on the
zonotope $Z_-:=Z(\Theta)\simeq Z(2m,3)$ (where, as before, $\Theta$ is a
symmetric set of generators $\theta_i$, $i\in\mmmm$, as
in~\refeq{Theta},\refeq{sym-theta}). We are going to expand $Q$ to a cubillage
$\tilde Q$ on the zonotope $Z\simeq Z(2m+1,3)$ generated by $\Theta$ plus the
vector $\theta_0$ which, w.l.o.g., can be defined as the vertical vector
$(0,1,0)$. Based on properties of cubillages (see~\cite[Sect.~3.3]{DKK3}), an expansion of
a cubillage to a larger one with an additional color is performed by use of a
certain strong membrane.

More precisely, in our case, by a strong membrane related to color 0, or a
\emph{0-membrane}, we mean a (closed two-dimensional) subcomplex $N$ of $Q$ such that
the projection $\pi_0:\Rset^3\to\Rset^2$ parallel to the vector $\theta_0$ is
injective on $N$ (regarded as a set of points), and $\pi_0(N)=\pi_0(Z_-)$. It divides $Z_-$ into two halves
$\Zlow_-(N)$ and $\Zup_-(N)$ consisting of the points lying below and above $N$
(including $N$ itself), respectively. Two additional conditions on $N$ are
needed to us, namely:
  \begin{numitem1} \label{eq:0membr}
 \begin{itemize}
\item[(i)] $N$ is symmetric, i.e., $N^\natural=N$; and
\item[(ii)] the vertices of $\Cscr'$ lie in $\Zlow_-(N)$, while those of
$\Cscr''$ in $\Zup_-(N)$;
  \end{itemize}
  \end{numitem1}
such an $N$ is called \emph{feasible}.

The \emph{expansion operation} on $Q$ using $N$ acts as follows: it splits $Q$
into two closed parts (subcomplexes) $Q'$ and $Q''$ lying in $\Zlow_-(N)$ and
$\Zup_-(N)$, respectively; the part $Q'$ is preserved, while $Q''$ moves by
$\theta_0$, and the gap between $Q'$ and $Q''+\theta_0$ is filled by new cubes,
each being the Minkowski sum of a facet (``rhombus'') of $Q$ contained in $N$ and the
segment $[0,\theta_0]$. The resulting complex is called the \emph{expansion} of
$Q$ by color 0 using $N$. One easily shows that
  \begin{numitem1} \label{eq:expanQ}
if $Q$ is symmetric and $N$ is feasible, then the expansion of $Q$ by color 0 using $N$ is a symmetric cubillage on $Z(2m+1,3)$ whose spectrum includes $\Dscr$.
  \end{numitem1}

In light of~\refeq{expanQ}, we will attempt to show that a feasible membrane in
an appropriate $Q$ does exist (whence the result will immediately follow). 

Our programme to accomplish this task is a bit
tricky. We first lift $\Dscr$ to a symmetric collection $\Escr$ in $2^\mmmp$,
where $\mmmp$ is $\mmmm$ to which two symmetric colors $0'$ and $0''$ are
added, namely, $\{-m,\ldots,-1,0',0'',1,\ldots,m\}$. Second, we include $\Escr$
in the spectrum of a symmetric cubillage  $R$ on the zonotope $Z_+\simeq
Z(2m+2,3)$ generated by $\Theta$ plus two vectors $\theta_{0'},\theta_{0''}$
viewed as $\theta_{0'}:=(-\vareps,1,0)$ and $\theta_{0''}:=(\vareps,1,0)$ for a
small real $\vareps>0$. Third, we reduce $R$ by eliminating colors $0'$ and
$0''$ so as to obtain a symmetric cubillage $Q$ on $Z_-\simeq(2m,3)$ equipped
with two particular 0-membranes $N'$ and $N''$ which are symmetric to each
other. Fourth, updating $N'$ and $N''$ symmetrically, step by step, we
eventually turn these into the desired symmetric membrane $N$ in $Q$.

Next we describe these stages more carefully, using symbol $\bullet$ to denote
the symmetry in $2^\mmmp$ (and the K-involution in $Z_+$). One can see that for
$X\subseteq\mmmp$, if $X$ contains none of $0',0''$, then
$X^\bullet=X^\natural\, 0'0''$; if $X$ contains $0'$ but not $0''$ (resp. $0''$
but not $0'$), then $X^\bullet$ is $(X-0')^\natural\, 0'$ (resp.
$(X-0'')^\natural\, 0''$); and if $X$ contains both $0',0''$, then $X^\bullet=(X-\{0',0''\})^\natural$.

Define the collections $\Escr'$, $\Escr''$, $\Escr$ in $2^\mmmp$ by
  \begin{equation} \label{eq:EEE}
  \Escr':=\Dscr',\quad \Escr'':=\{(X-0)\,0'0''\colon X\in\Dscr''\},
  \quad \mbox{and} \quad \Escr:=\Escr'\cup \Escr''.
  \end{equation}

The symmetry of $\Dscr$ implies that of $\Escr$, and a routine verification
shows that
  \begin{numitem1} \label{eq:E-csep}
$\Escr$ is chord separated.
  \end{numitem1}

In view of~\refeq{E-csep} and since $2m+2$ is even, there exists a symmetric
cubillage $R$ on $Z_+\simeq Z(2m+2,3)$ such that $\Escr\subset V_R$. Let
$\Pscr'$ ($\Pscr''$) be the set of cubes $C=(X\,|\,T)$ whose type $T$ contains
color $0'$ (resp. $0''$); following terminology in~\cite[Sect.~3.3]{DKK3}, we refer to this set (or its induced subcomplex in $R$) as the $0'$-\emph{pie} (resp.
$0''$-\emph{pie}) in $R$. By a simple, but important, property of pies,
$\Pscr'$ is representable as the (Minkowski) sum of a
$0'$-membrane $N'$ and the segment $[0,\theta_{0'}]$. Similarly, $\Pscr''$ is
the sum of a $0''$-membrane $N''$ and $[0,\theta_{0''}]$. 

One can check that for a cube $C=F+[0,\theta_{0'}]$ in $\Pscr'$, where $F$ is a facet in $N'$, its symmetric cube $C^\bullet$ in $R$ is viewed as $(F0')^\bullet+[0,\theta_{0''}]$, and therefore, $(F0')^\bullet$ belongs to $N''$, and $C^\bullet$ to $\Pscr''$. And similarly for cubes in $\Pscr''$. This and the identity $((F0')^\bullet 0'')^\bullet=F$ (where $F$ is in $N'$) imply that
  \begin{numitem1} \label{eq:piesPP}
the pies $\Pscr'$ and $\Pscr''$ are symmetric to each other, the
$0'$-membrane $N'$ is symmetric to the $0''$-membrane $N''+\theta_{0''}$, and similarly, $N''$ is symmetric to $N'+\theta_{0'}$.
  \end{numitem1}

Now consider the collections $\Escr'$ and $\Escr''$ as in~\refeq{EEE}. Since
the sets in $\Escr'$ (in $\Escr''$) do not contain colors $0',0''$ (resp.
contain both $0',0''$), the vertices of $R$ representing these sets are located (non-strictly) below (resp. above) $\Pscr'\cup\Pscr''$. We further need to use the
contraction operations on pies of cubillages, which are converse, in a sense,
to the expansion operations on membranes; for details, see~\cite[Sect.~3.3]{DKK3}.

The \emph{contraction operation} applied to $\Pscr'$ removes from $R$ the cubes
lying between the membranes $N'$ and $N'+\theta_{0'}$ and moves the part of $R$
above $N'+\theta_{0'}$ by $-\theta_{0'}$, so that the images of these membranes become glued together. The contraction operation applied to $\Pscr''$ (or to the image of $\Pscr''$ after contracting $\Pscr'$) acts similarly. These two operations
commute, and applying both (in any order), we obtain a symmetric cubillage $Q$
on $Z_-$ and two membranes $\tilde N'$ and $\tilde N''$ in it, which are the
images of $N',N''$ and can be simultaneously thought of as 0-membranes since the vectors
$\theta_0,\theta_{0'},\theta_{0''}$ are close to each other. Moreover, since the vertices in $\Escr'$ contain none of colors $0',0''$ (and therefore lie below the pies $\Pscr',\Pscr''$), whereas those in $\Escr''$ contain both $0',0''$ (and lie above these pies), we observe, by comparing $\Cscr'$ with $\Escr'$, and $\Cscr''$ with $\Escr''$, that
  \begin{numitem1} \label{eq:locCC}
$\Cscr'$ lies in the region $\Zlow_-(\tilde N')\cap \Zlow_-(\tilde N'')$ (i.e.,
below both $\tilde N',\tilde N''$), while $\Cscr''$ lies in $\Zup_-(\tilde
N')\cap \Zup_-(\tilde N'')$ (i.e., above both $\tilde N',\tilde N''$).
  \end{numitem1}

Next, for a closed subset $U$ of points in $Z_-$, define $\Ufr_0$ (resp. $\Urear_0$) to be
the set of points $u\in U$ ``seen from below'' (resp.``seen from above''), i.e., such that $U$ has no point of the form
$u-\delta\theta_0$ (resp. $u+\delta\theta_0$) with $\delta>0$; we call this the
\emph{0-front} (resp. \emph{0-rear}) \emph{side} of $U$. It is not difficult to show (cf.~\cite[Sect.~3.4]{DKK3}) that any set $S$ of cubes in a cubillage on $Z_-$ has a minimal (maximal) element $C$, in the sense that there is no $C'\in S$ such that $\Cfr_0$ and $\Cprear_0$ (resp. $\Crear_0$ and $\Cpfr_0$) share a facet. (In other words, there is no cube in $S$ before (resp. after) $C$ in the direction $\theta_0$.)

Define
  $$
  H':=(\tilde N'\cup \tilde N'')^{\rm fr}_0\quad \mbox{and}
  \quad H'':=(\tilde N'\cup \tilde N'')^{\rm rear}_0.
  $$

One can see that $H',H''$ are 0-membranes in $Z_-$, $H''$ is symmetric to $H'$
and lies (non-strictly) above $H'$, and in view of~\refeq{locCC}, the
collection $\Cscr'$ lies below $H'$, while $\Cscr''$ does above $H''$. Now the
result is provided by the following
  \begin{lemma} \label{eq:between}
There exists a symmetric 0-membrane $H$ lying between $H'$ and $H''$.
  \end{lemma}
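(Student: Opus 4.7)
The plan is to prove the lemma by induction on the number $|S|$ of cubes of $Q$ lying in the sub-cubillage $S$ enclosed between $H'$ and $H''$. Since the geometric $K$-involution $\mu$ on $Z_-$ reverses the height coordinate, it reverses the 0-order on cubes of $Q$; moreover $\mu$ swaps $\tilde N'$ and $\tilde N''$ (by~\refeq{piesPP} transported to $Z_-$), so it sends the 0-front side of their union to the 0-rear side, giving $\mu(H')=H''$. Consequently $S$ is $\mu$-invariant and its two boundary membranes are swapped by $\mu$. In the base case $|S|=0$ we have $H'=H''$, and $H:=H'$ is the desired symmetric 0-membrane.

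A crucial preliminary observation is that no cube $C=(X\,|\,T)$ of $Q$ can be $\mu$-invariant. Indeed, such a $C$ would have its lowest vertex $X$ mapped by $\mu$ to its highest vertex $X\cup T$, whence $X^\natural=X\cup T$. Since $|X^\natural|=2m-|X|$ and $|X\cup T|=|X|+3$, this would force $2|X|+3=2m$, impossible for an integer $|X|$. Hence $\mu$ partitions the cubes of $S$ into genuine pairs $\{C,\mu(C)\}$ with $C\ne\mu(C)$.

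For the inductive step, pick any cube $C$ minimal in $S$ in the 0-order; then $\mu(C)$ is maximal in $S$ and, by the parity argument, $C\ne\mu(C)$. Since any two cubes of $Q$ sharing a $\pi_0$-fiber are comparable in the 0-order, $C$ and $\mu(C)$ either lie in distinct $\pi_0$-fibers or satisfy $C<\mu(C)$; in either case we may ``push $H'$ upward'' across $C$, replacing the part $\Cfr_0\subseteq H'$ with $\Crear_0$, to obtain a new 0-membrane $H'_1$, and symmetrically ``push $H''$ downward'' across $\mu(C)$ to obtain $H''_1$. From the symmetry of the pair $\{C,\mu(C)\}$ together with the fact that $\mu$ swaps the ``0-front'' and ``0-rear'' labels one computes $\mu(H'_1)=H''_1$; the inequality $H'_1\le H''_1$ persists (in the case of a common fiber because the top of $C$ lies at or below the bottom of $\mu(C)$); and the new region $S_1=S\setminus\{C,\mu(C)\}$ is $\mu$-invariant and strictly smaller. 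Applying the induction hypothesis to $(H'_1,H''_1,S_1)$ produces a symmetric 0-membrane $H$ lying between $H'_1$ and $H''_1$, which is therefore sandwiched between $H'$ and $H''$ as required.

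The main obstacle I foresee is precisely the possibility of a self-symmetric cube in $S$, which would wreck the inductive pairing of minima with maxima; the parity computation above removes this obstacle cleanly and is, I expect, the crux of the argument. The remaining verifications, that $H'_1$ and $H''_1$ are genuine 0-membranes and that the swap by $\mu$ is preserved throughout, are local and follow immediately from the definitions of the 0-front/0-rear sides and from $\Cfr_0$ and $\Crear_0$ being opposite faces of the cube $C$.
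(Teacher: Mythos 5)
Your proof is correct and follows essentially the same strategy as the paper: select an extremal cube $C$ in the gap between $H'$ and $H''$, pair it with the (necessarily distinct) symmetric cube $\mu(C)$, push both membranes inward across this pair, and iterate. The only cosmetic difference is the argument that $C\ne\mu(C)$: you deduce it from a parity contradiction $2|X|=2m-3$ on the height of the lowest vertex, whereas the paper notes that the triple of edge colors of $C$ cannot be stable under $i\mapsto i^\circ$ because that involution on $[-m..m]^-$ is fixed-point-free and a $3$-element set cannot be a union of $2$-element orbits; both arguments rest on $n=2m$ being even.
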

  \begin{proof}
If $H'=H''$, we are done. So assume that the set $\Sscr$ of cubes of $Q$
filling the gap $\omega$ between $H'$ and $H''$ is nonempty. 

Let $C$ be a maximal cube in $\Sscr$ (in the sense explained above). Then the 0-rear side $\Crear_0$ of $C$ is entirely contained in $H''$ (for if a facet $F$ of $\Crear_0$ is not in $H''$, then there is a cube $C'$ with $F\subset \Cpfr_0$, and this $C'$ lies in $\omega$ and is greater than $C$). It follows that replacing in $H''$ the side (disk)
$\Crear_0$ by $\Cfr_0$, we obtain a correct 0-membrane $\hat H''$ lying in
$\omega$. By the symmetry of $\omega$, the cube $C^\natural$ symmetric to $C$
lies in $\omega$ as well, and its 0-front side is entirely contained in $H'$.
Let $\hat H'$ be the 0-membrane obtained by replacing in $H'$ the 0-front side
of $C^\natural$ by its 0-rear side. 

An important fact is that the cubes $C$ and
$C^\natural$ are different. This is so because if $C$ has type $\{i,j,k\}$ (the edge colors in $C$), then, by symmetry, $C^\natural$ has type $\{i^\circ,j^\circ,k^\circ\}$, and $\{i,j,k\}=\{i^\circ,j^\circ,k^\circ\}$ is impossible since $\mmmm$ has no
color symmetric to itself. The transformation $(H',H'')\mapsto (\hat H', \hat H'')$ is illustrated in the picture (where for simplicity we give two-dimensional projections).

\vspace{-0.2cm}
\begin{center}
\includegraphics[scale=0.8]{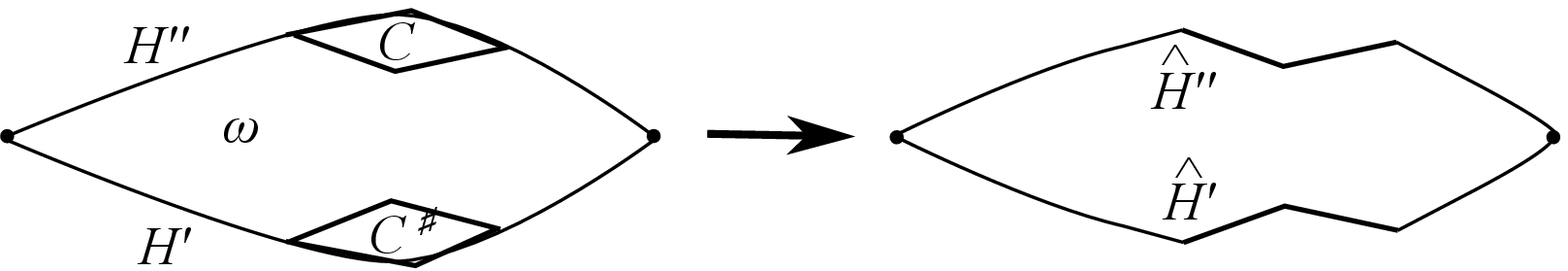}
\end{center}
\vspace{-0.2cm}

So the new 0-membranes $\hat H'$ and $\hat H''$ are symmetric to each other,
the former lies (non-strictly) below the latter, and the gap between them is
smaller than $\omega$. Repeating the procedure, step by step, we eventually
obtain a pair for which the gap vanishes, yielding the desired $H$, as required.
  \end{proof}

This completes the proof of Theorem~\ref{tm:symm-chord} for the case of odd colors. \hfill\qed\qed
  \medskip

\noindent\textbf{Remark 6.} For an integer $n>0$, let us say that $J\subseteq
[0..n]$ is symmetric if $k\in J$ implies $n-k\in J$. Let $\Lambda_n(J)$ be the
collection of sets $\binom{[n]}{k}$ over $k\in J$. From Theorem~\ref{tm:symm-chord} one can obtain a slightly sharper purity result, namely:
 \begin{numitem1} \label{eq:sharper}
for a fixed symmetric $J\subseteq[0..n]$, all inclusion-wise maximal symmetric
chord separated collections in $\Lambda_n(J)$ have the same size.
  \end{numitem1}

Indeed, consider such a collection $\Cscr$ and extend it to a maximal symmetric
c-collection $\Sscr$ in $2^{[n]}$. From the equality $|\Sscr|=c_n$ (by
Theorem~\ref{tm:symm-chord}) in follows that for each $k\in[0..n]$, the number
of sets $A\in \Sscr$ with $|A|=k$ is equal to exactly $k(n-k)+1$. Also the
symmetry of $J$ implies that for each $k\in J$, the subcollection
$\Cscr\cap\binom{[n]}{k}$ coincides with $\Sscr\cap\binom{[n]}{k}$, and the
result follows.
 \medskip
 
 In conclusion of this paper, we give an analog of Theorem~\ref{tm:symm-w-c} for $n$ odd. Recall that in this case the maximal size of a symmetric w-collection $\Wscr$ in $2^{[n]}$ is $(n-1)/2$ units less than the maximal size $s_n$ for the non-symmetric case. For this reason, when dealing with corresponding cubillages $Q$ on $Z(n,3)$, we need to consider a slightly wider class of membranes. Namely, compared with the definition of a weak membrane given in Sect.~\SEC{symm-c-even} where merely horizontal and vertical  facets in the fragmentation $\Qfrag$ are allowed to use, we add one more type of facets $F$. Such an $F$ is obtained as the \emph{middle vertical section} of the octahedron $\Cfrag_2$ in the fragmentation of a cube $C=(X\,|\, T)$ with colors $T$ of the form $(i<0<i^\circ)$, i.e. $F$ is the rectangle spanning the vertices $Xi,Xi0,Xi^\circ,X0i^\circ$. In addition, we assume that such facets can appear only in the middle level of $Q$, namely, when $|X|=m-1$, where $n=2m+1$.We refer to the corresponding subcomplexes in the \emph{extended fragmentation} of $Q$ constructed in this way (and respecting the projection $\pi^\rho$) as \emph{extraordinary weak membranes}.

\begin{theorem} \label{tm:symm-w-c-odd}
For $n$ odd, any maximal symmetric w-collection $\Wscr\subseteq 2^{[n]}$ can be represented as the spectrum of a symmetric extraordinary weak membrane $N$ in the extended fragmentation of some symmetric cubillage $Q$ on $Z(n,3)$ (and such $Q$ and $N$ can be constructed efficiently). In particular, $V_Q$ is a maximal symmetric c-collection including $\Wscr$.
  \end{theorem}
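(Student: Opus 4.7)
The plan is to reduce to the even case already handled in Theorem~\ref{tm:symm-w-c}. Given a maximal symmetric w-collection $\Wscr \subseteq 2^{[n]}$ with $n = 2m+1$, first apply the contraction operation from Section~\ref{sec:symm-w-odd} to obtain a maximal symmetric w-collection $\Dscr \subseteq 2^{[-m..m]^-}$ (namely, $\Dscr := \{A\in\Wscr \colon 0\notin A\} \cup \{A-0\colon A\in\Wscr,\, 0\in A\}$). By Theorem~\ref{tm:symm-w-c}, there exist a symmetric cubillage $Q_0$ on $Z(2m,3)$ and a symmetric w-membrane $N_0$ in its fragmentation such that $V_{N_0} = \Dscr$, with projection $K_0 := \pi^\rho(N_0)$ a symmetric ftq-combi on $Z(2m,2)$. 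By~\refeq{n/2}, the middle line of $K_0$ is covered by $m$ edges of type $ii^\circ$, which lift in $N_0$ to a zigzag of $m$ parallelogram facets $F_1,\ldots,F_m$ with edge colors $(i,i^\circ)$ lying across the axis $L_0$ of $Z(2m,3)$.

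Next I construct a symmetric strong $0$-membrane $M_0$ in $Q_0$ containing the zigzag $\{F_p\}_{p=1}^m$. To do this, I adapt the expand-contract technique of Section~\ref{sec:symm-c-odd}: enlarge $Q_0$ to a symmetric cubillage $R$ on $Z(2m+2,3)$ by introducing two auxiliary symmetric colors $0',0''$ with $\theta_{0'}=(-\varepsilon,1,0)$ and $\theta_{0''}=(\varepsilon,1,0)$, arranging the spectrum of $R$ so that the $0'$- and $0''$-pies straddle the zigzag; then contract the $0'$- and $0''$-pies and apply the gap-closing procedure of Lemma~\ref{eq:between} to obtain a single symmetric $0$-membrane $M_0 \subseteq Q_0$ that still passes through every $F_p$. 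Expanding $Q_0$ by color $0$ (with $\theta_0=(0,1,0)$) along $M_0$ yields a symmetric cubillage $Q$ on $Z(n,3)$; the cubes newly inserted along the zigzag are precisely of type $(i,0,i^\circ)$.

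Finally the extraordinary w-membrane $N$ is obtained from $N_0$ by (i) keeping the part $N_0^\downarrow$ lying below $M_0$ unchanged, (ii) shifting the part $N_0^\uparrow$ lying above $M_0$ by $\theta_0$ (which converts each vertex $A$ into $A\cup\{0\}$), and (iii) replacing each parallelogram $F_p$ of the zigzag by the middle vertical section $\hat F_p$ of the newly inserted cube of type $(i,0,i^\circ)$ --- this is the rectangle with vertex set $\{Xi, Xi0, Xi^\circ, X0i^\circ\}$ permitted by the extended fragmentation. The projection $\pi^\rho(N)$ reproduces the ``pseudo-combi'' $K'$ of Section~\ref{sec:symm-w-odd} (with each rectangle $F_p$ preserved rather than subdivided into four triangles around a central vertex $v_p$), whence $\pi^\rho$ is bijective on $N$ and $V_N$ coincides with $\Wscr$. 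Symmetry of $N$ follows from that of $N_0$ and of $M_0$, and $V_Q \supseteq V_N = \Wscr$ is the asserted maximal symmetric c-collection by Theorem~\ref{tm:symm-chord}. The main obstacle is the second step: one must ensure that the expand-contract procedure can be arranged so that the resulting $M_0$ really passes through every $F_p$, which requires a careful refinement of Lemma~\ref{eq:between} to accommodate this additional constraint on the auxiliary lift.
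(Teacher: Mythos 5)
Your proposal takes a genuinely different route from the paper. Where the paper works directly in $Z(n,3)$ — building the pseudo-combi $K$ of Section~\ref{sec:symm-w-odd}, refining it to a (non-symmetric) ftq-combi $\tilde K$, lifting $\tilde K$ to a w-membrane $\tilde N$ in some cubillage by~\refeq{membr-combi}, replacing the octahedral rears $D(p)^{\rm rear}$ by the middle vertical sections $S_p$, and finally symmetrizing the half-fragmentation between $\Zpfr$ and $N$ — you instead contract to the even case, invoke Theorem~\ref{tm:symm-w-c}, and try to lift back by an expansion along a carefully chosen symmetric $0$-membrane $M_0$. This reduction is natural given the structure of Sections~\ref{sec:symm-w-odd} and~\ref{sec:symm-c-odd}, but it runs into two genuine difficulties.

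First, your ``zigzag of $m$ parallelogram facets $F_1,\ldots,F_m$'' in $N_0$ need not exist. The middle edges $e_p$ of the symmetric ftq-combi $K_0$ lift to \emph{edges} of $N_0$ lying on the axis $L_0$, not automatically to full rhombus facets of cubes of $Q_0$. The tile of $K_0$ above $M$ and the tile below $M$ that share $e_p$ are $M$-reflections of each other, and this pair can just as well be an upper/lower pair of semi-lenses as a $\Delta/\nabla$ pair. In the semi-lens case the lifts of the two tiles are \emph{horizontal} sections of two different cubes of $Q_0$, and the rhombus $F_p = \{X,Xi,Xi^\circ,Xii^\circ\}$ with $Xi=R_{p-1},\,Xi^\circ=R_p$ is not a facet of any cube of $Q_0$ — indeed $X$ and $Xii^\circ$ need not even be vertices of $Q_0$. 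So the geometric object that your strong $0$-membrane $M_0$ is supposed to contain may simply fail to be there. (The paper's proof sidesteps this because it builds $\tilde Q$ from the \emph{full} ftq-combi $\tilde K$, whose quadruples $\Tscr_p$ force the existence of cubes $C(p)$ with colors $i<0<i^\circ$ by construction.)

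Second, even when the zigzag does consist of rhombus facets, the step you yourself flag as ``the main obstacle'' — forcing the symmetric $0$-membrane $M_0$ produced by the expand–contract procedure and the gap-closing Lemma~\ref{eq:between} to pass through all of $F_1,\ldots,F_m$ — is not established. Lemma~\ref{eq:between} only gives \emph{some} symmetric membrane between $H'$ and $H''$; it carries no control over which rhombus facets it uses, and the iterative cube-removal procedure in its proof can steer the membrane away from the $F_p$'s. Moreover, after contracting the $0'$- and $0''$-pies you get \emph{some} symmetric cubillage on $Z(2m,3)$, but nothing forces it to coincide with the $Q_0$ you started from in step~2, so the membranes $\tilde N',\tilde N''$ need not live in the same cubillage as $N_0$. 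Closing these gaps would require a substantially stronger version of Lemma~\ref{eq:between} (with the extra constraint on the membrane's facets), together with a compatibility argument between the even-case cubillage $Q_0$ from Theorem~\ref{tm:symm-w-c} and the odd-case $0$-pie construction; the paper's direct construction avoids both issues.
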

  
\begin{proof}
For $n=2m+1$, let $\Wscr$ be a symmetric w-collection in $2^{[n]}$ of size $s_n-m$. As is explained in Sect.~\SEC{symm-w-odd}, in the symmetric zonogon $Z$ generated by vectors $\xi_{-m},\ldots\xi_{-1},\xi_0,\xi_{1}\ldots,\xi_m$ as in~\refeq{symgen}, one can construct a tiling  $K$ on $Z$, called a \emph{pseudo-combi}, such that: (a) $V_K=\Wscr$; (b) the sub-tiling $\Klow$ of $K$ below the horizontal line $M$ (cf.~\refeq{M}) is symmetric to the sub-tiling $\Kup$ of $K$ above the line $M^+=M+\xi_0$; (c) combining $\Klow$ with $\Kup$ shifted by $-\xi_0$ results in a symmetric ftq-combi on the reduced $2m$-colored zonogon; and (d) the rectangular gap $F$ between $M$ and $M^+$ in $Z$ is subdivided into rectangles $F_1,\ldots,F_m$. 

This $K$ can be transformed into a correct (non-symmetric) ftq-combi $\tilde K$ by subdividing each $F_p$ into the quadruple $\Tscr_p$ of triangles as is~\refeq{rect-4triang} (see also the picture above Remark~4); so the non-symmetry of $\tilde K$ concerns merely the part $F$. Take a cubillage $\tilde Q$ on $Z'\simeq Z(n,3)$ and a w-membrane $\tilde N$ in $\tilde Q$ such that $\tilde K=\pi^\rho(\tilde N)$ (existing by~\refeq{membr-combi}). One can realize that each quadruple $\Tscr_p$ in $\tilde K$ corresponds, in the fragmentation ${\tilde Q}^\equiv$ of $\tilde Q$, to the rear side $D(p)^{\rm rear}$ of the octahedral (middle) fragment $D(p)$ of  some cube $C(p)$ of $\tilde Q$, i.e., $\pi^\rho$ maps the facets of $D(p)$ to the triangles of $\Tscr_p$.  (This cube has colors $i<0<i^\circ$, where $F_p$ is of the form $e_p\times \xi_0$ and the horizontal edge $e_p$ has type $ii^\circ$.) Then the middle vertical section $S_p$ of $C(p)_2^\equiv$  corresponds to the rectangle $F_p$ in $K$, namely, $\pi^\rho(S_p)=F_p$.

Now for each $p=1,\ldots,m$, replace in $\tilde N$ the part $D(p)^{\rm rear}$ by $S_p$. This gives an extraordinary weak membrane $N$ which is bijective (under $\pi^\rho$) to the pseudo-combi $K$. In the extended fragmentation of $Q'$, take the sub-complex $G$ lying between $\Zpfr$ and $N$, and form its symmetric complex $G'$. The facts that $N$ is self-symmetric and $\Zpfr$ is symmetric to $\Zprear$ imply that $G'$ is a subdivision of the part of $Z'$ between $N$ and $\Zprear$. Then the union of $G$ and $G'$ contains $N$ (as a sub-complex) and constitutes the correct extended fragmentation of  a symmetric cubillage $Q$ (taking into account that for each section $S(p)$, the fragments in $G$ and $G'$ sharing $S(P)$ are symmetric to each other and their union forms the octahedral  fragment of a symmetric cube). This implies the theorem.
\end{proof}

\noindent\textbf{Remark 7.} A similar result is valid for the strong separation, namely: for $n$ odd, any maximal symmetric s-collection $\Sscr\subseteq 2^{[n]}$ can be represented as the spectrum of a symmetric extraordinary strong membrane $N$ in the sparse fragmentation of some symmetric cubillage $Q$ on $Z(n,3)$. Here the \emph{sparse fragmentation} concerns transformations of merely self-symmetric cubes $C=(X\,|\,T)$, i.e., such that $|X|=m-1$ (where $n=2m+1$) and $T=(i<0<i^\circ)$ for $i\in[-m..-1]$, under which $C$ is subdivided into two parts (symmetric to each other) sharing the vertical section (rectangle) $S(C)$ spanning the vertices $Xi,Xi^\circ,Xi0,X0i^\circ$. Accordingly, an \emph{extraordinary strong membrane} differs from a usual strong membrane by allowing to use as facets such sections $S(C)$ (and the other facets are 
halves (vertical triangles) of facets of cubes, as usual). To obtain the assertion, we take the symmetric non-standard tiling $T$ with $V_T=\Sscr$ on $Z(n,2)$ as described in Remark~4. In the construction of the desired $Q$ and $N$, each central hexagon $H$ in $T$ should be the projection (by $\pi^\rho$) of the union of three facets in the sparse fragmentation of a self-symmetric cube $C$ in $Q$ (of which one is the rectangular section $S(C)$). We leave details to the reader.


\end{document}